\lstdefinestyle{mystyle}{
    backgroundcolor=\color{white},   
    commentstyle=\color{green},
    keywordstyle=\color{red},
    numberstyle=\tiny\color{orange},
    stringstyle=\color{red},
    basicstyle=\ttfamily\footnotesize,
    breakatwhitespace=false,         
    breaklines=true,                 
    captionpos=b,                    
    keepspaces=true,                 
    numbers=left,                    
    numbersep=5pt,                  
    showspaces=false,                
    showstringspaces=false,
    showtabs=false,                  
    tabsize=2
}
 \pgfmathsetlengthmacro\lw{.3pt+.5\pgflinewidth}
 \pgfmathtruncatemacro\dashnum{%
 round((\pgfdecoratedinputsegmentlength-3pt)/6pt)
 }
 \pgfmathsetmacro\dashscale{%
 \pgfdecoratedinputsegmentlength/(\dashnum*6pt + 3pt)
 }
 \pgfmathsetlengthmacro\dashunit{3pt*\dashscale}
\newtheorem{thm}[lemma]{Theoram}
\newtheorem{rem}[lemma]{Remark}
\newtheorem{lem}[lemma]{Lemma}
\newtheorem{prop}[lemma]{Proposition}
\newtheorem{defn}[lemma]{Definition}
\renewcommand{\arraystretch}{1.2}
\numberwithin{equation}{section}
\numberwithin{corollary}{section}
\numberwithin{remark}{section}
\numberwithin{theorem}{section}
\numberwithin{lemma}{section}
\newcommand{\Fhyp}[5]{\,\mbox{}_{#1}F_{#2}\hspace{-0.2mm}
\left(\hspace{-2mm}\begin{array}{c} {#3}\\ {#4}\end{array}
\hspace{-1mm};#5\right)}
\newcommand{\qhyp}[5]{\,\mbox{}_{#1}\phi_{#2}\hspace{-1mm}
\left(\hspace{-2mm}\begin{array}{c} {#3}\\{#4}\end{array}
\hspace{-1mm};#5\right)}
\newcommand{\dln}{\nabla\!{}_n}
\newcommand{\dlpa}[1]{\nabla\!{}_{{#1}}}
\def\eqnarray{\stepcounter{equation}\let\@currentlabel=\theequation
\global\@eqnswtrue
\tabskip\@centering\let\\=\@eqncr
$$\halign to\displaywidth\bgroup\hfil\global\@eqcnt\z@
$\displaystyle\tabskip\z@{##}$&\global\@eqcnt\@ne
\hfil$\displaystyle{{}##{}}$\hfil
&\global\@eqcnt\tw@ $\displaystyle{##}$\hfil
\tabskip\@centering&\llap{##}\tabskip\z@\cr}
\def\endeqnarray{\@@eqncr\egroup
\global\advance\c@equation\m@ne$$\global\@ignoretrue}
\def\@yeqncr{\@ifnextchar [{\@xeqncr}{\@xeqncr[5pt]}}
\let\svus_
\def\lowerit#1{\ThisStyle{\raisebox{-2\LMpt}{$\SavedStyle#1$}}\egroup}
\begin{document}

\renewcommand{\PaperNumber}{***}

\FirstPageHeading

\ArticleName{The Laguerre constellation of classical orthogonal  Polynomials}
\ShortArticleName{The Laguerre constellation of classical orthogonal  Polynomials}

\Author{R. S. Costas-Santos$^{1}\href{https://orcid.org/0000-0002-9545-7411}{\includegraphics[scale=0.1]{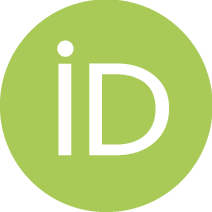}}{}$}

\AuthorNameForHeading{R. S. Costas-Santos}

\Address{$^1$ Department of Quantitative Methods, 
Universidad Loyola Andaluc\'ia, E-41704, Sevilla, Spain} 
\URLaddressD{
\href{http://www.rscosan.com}
{http://www.rscosan.com}
}
\EmailD{rscosa@gmail.com} 

\ArticleDates{Received {\bf \today} in final form ????; 
Published online ????}

\abstract{A linear functional $\bf u$ is classical if there exist polynomials, 
$\phi$ and $\psi$, with $\deg \phi\le 2$, $\deg \psi=1$, such that 
${\mathscr D}\left(\phi(x) {\bf u}\right)=\psi(x){\bf u}$, 
where ${\mathscr D}$ is a certain differential, or difference, operator. 
The polynomials orthogonal with respect to the linear functional ${\bf u}$ are 
called {\sf classical orthogonal polynomials}.
In the theory of orthogonal polynomials, a correct characterization 
of the classical families is of great interest. In this work, on the one hand, 
we present the Laguerre constellation, which is formed by all the classical 
families for which $\deg \phi=1$, obtaining for 
all of them new algebraic  identities such as structure formulas, orthogonality properties 
as well as new Rodrigues formulas; on the other hand, we present a theorem that 
characterizes the classical families belonging to the Laguerre constellation.
}

\Keywords{Recurrence relation; 
Characterization Theorem;
Classical orthogonal po\-ly\-no\-mials;
Laguerre constellation;} 

\Classification{42C05; 33C45; 33D45}
\section{Introduction} \label{sec1}
Orthogonal polynomials $(p_n(x))_n$ associated with a measure 
on the real line, i.e., 
\[
\int_{\mathbb R}
p_m(x)p_n(x) d\mu(x)=d^2_n \, \delta_{mn},
\] 
satisfy a three-term recurrence equation
\begin{equation} \label{ttrr}
xp_n(x) = \alpha_n p_{n+1}(x)+\beta_n p_n(x)+\gamma_n p_{n-1}(x),
\end{equation} 
where $p_{-1}(x) =0$, $p_0(x)=1$ and, according to the {\sf Favard 
theorem} (cf. \cite[p. 21]{mr0481884}), if $\gamma_n\ne 0$ for all 
$n\in \mathbb N$ this recurrence 
completely characterizes such a polynomial sequence.

Due to the property of orthogonality of such a polynomial sequence, it is 
well-known the following relation between the recurrence coefficients:
\begin{equation} \label{gamrel}
\gamma_n = \alpha_{n-1}\frac{d_n^2}{d_{n-1}^2},\quad n=1, 2, ...,
\end{equation}
where $d_n^2$ is the squared norm of $p_n$.

Such polynomial sequence is said to be classical if it is orthogonal with 
respect to a linear functional ${\bf u}: \mathbb P\to \mathbb C$ which 
fulfills the Pearson-type equation 
\begin{equation} \label{peq} 
{\mathscr D}\left(\phi(x){\bf u}\right)=\psi(x){\bf u},
\end{equation} 
where $\phi$ is a polynomial of degree at most 2, $\psi$ is a polynomial 
of degree $1$, and ${\mathscr D}$ is the differential operator in the 
continuous case, the forward $(\Delta)$ or backward $(\nabla)$ difference 
operator in the discrete case, and the Hahn operator $({\mathscr D}_q)$ 
in the $q$-discrete case.
\begin{rem}
Note that if a linear functional ${\bf u}$ fulfills \eqref{peq} then it also 
fulfills the Pearson-type equation \cite{mr2741220}
\[
{\mathscr D}^*\left(\phi^*(x){\bf u}\right)=\psi^*(x){\bf u},
\]
where\begin{itemize}
\item In the continuous case 
${\mathscr D}^*={\mathscr D}$, so $\phi^*(x)=\phi(x)$ 
and $\psi^*(x)=\psi(x)$.
\item In the discrete case 
$\phi^*(x)=\phi(x)+\psi(x)$, $\psi^*(x)=\psi(x)$, $\Delta^*=\nabla$, 
$\nabla^*=\Delta$. 
\item In the $q$-discrete case, $\phi^*(x)=\phi(x)+(q-1)x\,\psi(x)$, 
$\psi^*(x)=q\psi(x)$, and ${\mathscr D}^*_q={\mathscr D}_{q^{-1}}$.
\end{itemize}
\end{rem}
\begin{defn}\label{def:1.2} 
The polynomial sequence $(p_n(x))_n$ belongs to the {Laguerre constellation} (in short, LC) 
if it is a classical orthogonal polynomial sequence  and $\deg \phi=1$ or $\deg \phi^*=1$.
\end{defn}

The so-called characterization Theorems play a fundamental role, 
i.e., the Theorems which collect those properties that completely define and 
characterize the classical orthogonal polynomials.
One of the many ways to characterize a family of continuous classical 
polynomials (Hermite, Laguerre, Jacobi, and Bessel), which was first posed 
by R. Askey and proved by W. A. Al-Salam 
and T. S. Chihara \cite{mr316772} (see also \cite{mr1273613}), is the {\sf 
structure relation}
\begin{equation} \label{eq:1}
\phi {\mathscr D}p_n(x)=\widetilde a_n p_{n+1}(x)+\widetilde b_n p_{n}(x)+
\widetilde c_n p_{n-1}(x),
\end{equation}
where 
$\widetilde c_n\ne 0$. 

A. G. Garcia et al. proved in \cite{mr1340932} that the relation \eqref{eq:1} 
also characterizes the discrete classical orthogonal polynomials (Hahn, Krawtchouk, 
Meixner, and Charlier polynomials) when the forward replaces the derivative 
difference operator $\Delta$.

Later on, J. C. Medem et al. \cite{mr1850540} characterized the orthogonal 
polynomials which belong to the $q$-Hahn class by a structure relation obtained 
from \eqref{eq:1} replacing the derivative by the $q$-difference operator 
${\mathscr D}_q$ (see also \cite{mr2293537, mr2241592, mr1771452}). One 
of the most general characterization theorem for the $q$-polynomials in the 
$q$-quadratic lattice was done in \cite{MR2653053}.

The structure of this work is the following: in Section \ref{sec:2}, we introduce 
some notations and definitions used throughout the paper. In Section \ref{sec:3} 
we present the main results of the polynomials that belongs to the Laguerre 
constellation and the algebraic properties supporting the presented results. 
In Section \ref{sec:4} we present the specific identities about the different 
families of the Laguerre Constellation.  
Several Wolfram Mathematica${}^\circledR$ codes used to obtain the different 
expressions presented in this work have been included at the end of this work. 
\section{Preliminaries}\label{sec:2}
In this section, we will give a brief survey of the operational calculus that 
we will use as well as some basic notations that we will need in the rest of this work.
\subsection{Basic concepts and results}
We adopt the following set notations: $\mathbb N_0:=\{0\}\cup\mathbb 
N=\{0, 1, 2, \ldots\}$, and we use the sets $\mathbb Z$, $\mathbb R$, $\mathbb C$, which 
represents the integers, real numbers, and complex numbers, respectively.
Let $\mathbb P$ be the linear space of polynomial functions in $\mathbb C$ 
(in the following, we will refer to them as polynomials) with complex 
coefficients and $\mathbb P^*$ be its algebraic dual space, i.e., 
$\mathbb P^*$ is the linear space of all linear  
maps ${\bf u}: \mathbb P \to  \mathbb C$. In the following, we will call the 
elements of $\mathbb P^*$  as functionals. 
In general, we will represent the action of a linear functional over a polynomial by
\[
\langle {\bf u}, \pi \rangle,\quad {\bf u}\in \mathbb P^*, \ \pi\in \mathbb P. 
\]
Therefore, a functional is completely determined by a sequence of complex 
numbers $\langle {\bf u}, x^n\rangle=u_n$, $n=0, 1, ...$, the so-called moments 
of the functional.
\begin{defn} Let ${\bf u}\in \mathbb P^*$ be a functional. We say that ${\bf u}$ 
is a {\sf quasi-definite functional} if there exists a polynomial sequence $(p_n)$ 
which is orthogonal with respect to ${\bf u}$, i.e. 
\[
\langle {\bf u}, p_mp_n\rangle = k_n \delta_{mn}, \quad k_n\ne 0, \quad n=0, 1, ...,
\]
where $\delta_{mn}$ is the Kronecker delta.
\end{defn}

In order to obtain our derived identities, we rely on properties of the shifted factorial, 
or Pochhammer symbol, $(a)_n$, and the $q$-shifted factorial, or $q$-Pochhammer 
symbol, $(a; q)_n$. 
For any $n\in \mathbb N_0$, $a$, $q\in \mathbb C$, the shifted factorial is defined as
\[
(a)_n=a(a+1)\cdots (a+n-1),
\]
the $q$-shifted factorial is defined as
\[
(a;q)_n=(1-a)(1-a q)\cdots (1-a q^{n-1}).
\]
We will also use the common notational product conventions
\begin{align*}
(a_1,..., a_k)_n&=(a_1)_n\cdots (a_k)_n,\\
(a_1,..., a_k;q)_n&=(a_1;q)_n\cdots (a_k;q)_n.
\end{align*}

The hypergeometric series ${}_rF_s$,  where $s, r\in\mathbb N_0$, is 
defined by the series 
\cite[(1.4.1)]{mr2656096}
\begin{equation}
\Fhyp{r}{s}{a_1,...,a_r}{b_1,...,b_s}{z}
:=\sum_{k=0}^\infty\frac{(a_1,...,a_r)_k}
{(b_1,...,b_s)_k}\dfrac{z^k}{k!},
\label{eq:2.1}
\end{equation}
and the basic hypergeometric series ${}_r\phi_s$, 
is  defined by the series  
as \cite[(1.10.1)]{mr2656096}
\begin{equation}
\qhyp{r}{s}{a_1,...,a_r}{b_1,...,b_s}{q,z}:=\sum_{k=0}^\infty
\frac{(a_1,...,a_r;q)_k}{(q,b_1,...,b_s;q)_k}
\left((-1)^kq^{\binom k2}\right)^{1+s-r}z^k.
\label{eq:2.2}
\end{equation}
\noindent Observe that both, hypergeometric and basic hypergeometric 
series, are entire functions of $z$ if $s+1>r$, are convergent for $|z|<1$ 
for $s+1=r$, and divergent if $s+1<r$ and $z\ne 0$, {unless it  terminates}.

Note that when we refer to a hypergeometric 
or basic hypergeometric function with {\it arbitrary 
argument} $z$, we mean that the argument does 
not necessarily depend on the other parameters, 
namely the $a_j$'s, $b_j$'s. However, for the arbitrary 
argument $z$, it very-well may be that the domain 
of the argument is restricted, such as for $|z|<1$.

The next theorem \cite[p. 8]{mr0481884} is useful if one works 
with linear functionals.
\begin{thm}
Let ${\bf u}\in \mathbb P^*$ be a linear functional with moments $(u_n)$. 
Then, ${\bf u}$ is quasi-definite if and only if the Hankel determinants 
$H_n:=\det\left(u_{i+j}\right)_{i,j=0}^n\ne 0$, $n=0, 1, ....$
\end{thm}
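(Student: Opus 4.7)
The plan is to prove both directions by relating the orthogonal polynomial sequence $(p_n)$ to a linear system whose coefficient matrix is precisely the Hankel matrix $(u_{i+j})_{i,j=0}^n$, and, for the converse, by constructing $p_n$ explicitly as a determinant.

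For the forward implication, suppose $\mathbf{u}$ is quasi-definite with orthogonal sequence $(p_n)$. Writing $p_n(x) = \sum_{k=0}^n c_{n,k} x^k$ with $c_{n,n}\neq 0$, the $n$ orthogonality conditions $\langle \mathbf{u}, p_n x^j\rangle = 0$ for $j=0,\ldots,n-1$ together with the normalization $\langle \mathbf{u}, p_n x^n\rangle = k_n/c_{n,n} \neq 0$ form an $(n+1)\times(n+1)$ linear system in $(c_{n,0},\ldots,c_{n,n})$ whose coefficient matrix is $(u_{i+j})_{i,j=0}^n$. Since this system has a solution with $c_{n,n}\neq 0$ for arbitrary nonzero right-hand side, it must be uniquely solvable, forcing $H_n\neq 0$.

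For the converse, assume $H_n\neq 0$ for all $n\in\mathbb N_0$, and set $H_{-1}:=1$. Define
\[
p_n(x):=\frac{1}{H_{n-1}}\det\begin{pmatrix} u_0 & u_1 & \cdots & u_n \\ u_1 & u_2 & \cdots & u_{n+1} \\ \vdots & \vdots & & \vdots \\ u_{n-1} & u_n & \cdots & u_{2n-1} \\ 1 & x & \cdots & x^n \end{pmatrix}.
\]
Expanding along the last row, the cofactor of $x^n$ is $H_{n-1}$, so $p_n$ is monic of degree $n$. Since $\mathbf{u}$ is linear and acts only on the bottom row, $\langle \mathbf{u}, p_n x^j\rangle$ is computed by replacing that row with $(u_j,u_{j+1},\ldots,u_{j+n})/H_{n-1}$; for $0\le j<n$ this coincides with an earlier row and the determinant vanishes, while for $j=n$ one recovers $H_n/H_{n-1}$. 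Because $p_n$ is monic, $\langle \mathbf{u}, p_n^2\rangle=\langle \mathbf{u}, p_n x^n\rangle=H_n/H_{n-1}\neq 0$, proving orthogonality and hence quasi-definiteness.

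The argument is essentially bookkeeping: the key combinatorial fact is that applying $\mathbf{u}$ to the polynomial-valued last row of the defining determinant produces a numerical Hankel determinant, and the Hankel structure immediately produces repeated rows when tested against $1,x,\ldots,x^{n-1}$. The only subtlety to watch is carrying the normalization $H_{-1}:=1$ so that the base case $n=0$ (where $p_0=1$ and $k_0=u_0=H_0$) fits the same formula, but no genuine obstacle arises.
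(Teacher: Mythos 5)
The paper does not actually prove this theorem: it is quoted as a known result with a citation to Chihara's book, so there is no in-paper argument to compare against, and your proposal must stand on its own. Your converse direction does: the determinantal definition of $p_n$, the row-duplication argument giving $\langle\mathbf{u},p_nx^j\rangle=0$ for $0\le j<n$, the evaluation $\langle\mathbf{u},p_n^2\rangle=\langle\mathbf{u},p_nx^n\rangle=H_n/H_{n-1}\neq0$, and the convention $H_{-1}:=1$ are all correct; this is the standard construction.

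The forward direction, however, has a genuine gap. You exhibit a solution of the linear system with coefficient matrix $(u_{i+j})_{i,j=0}^{n}$ for one particular right-hand side, namely $(0,\dots,0,k_n/c_{n,n})^{T}$, and then conclude that the system is solvable for ``arbitrary nonzero right-hand side'' and hence uniquely solvable. Neither step is justified: solvability for a single right-hand side says nothing about nonsingularity (a singular matrix still hits every vector in its column space), and solvability for all right-hand sides was never established. The standard repair uses the whole family $p_0,\dots,p_n$ rather than $p_n$ alone. If $H_n=0$, the symmetric Hankel matrix has a nonzero kernel vector $(d_0,\dots,d_n)$, so the nonzero polynomial $d(x)=\sum_k d_kx^k$ satisfies $\langle\mathbf{u},d\,x^j\rangle=0$ for $j=0,\dots,n$ and therefore $\langle\mathbf{u},d\,p_j\rangle=0$ for $j=0,\dots,n$; expanding $d=\sum_{j\le n}a_jp_j$ and testing against each $p_j$ gives $a_jk_j=0$ for all $j$, hence $d=0$, a contradiction. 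Equivalently, observe that $(u_{i+j})_{i,j=0}^n$ is the Gram matrix of $1,x,\dots,x^n$ for the bilinear form $(f,g)\mapsto\langle\mathbf{u},fg\rangle$, and the triangular change of basis to $p_0,\dots,p_n$ converts it to $\mathrm{diag}(k_0,\dots,k_n)$, whence $H_n\cdot\prod_{j=0}^{n}c_{j,j}^{2}=\prod_{j=0}^{n}k_j\neq0$. With either of these replacements for your ``arbitrary right-hand side'' sentence, the proof is complete.
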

\subsection{Definition of the operators in $\mathbb P$ and $\mathbb P^*$}
Next, we will define the backward and forward difference operators as well 
the so called $q$-derivative operator, or Hahn operator.
\begin{eqnarray*}
\Delta f(x)&=&f(x+1)-f(x),\\
\nabla f(x)&=&f(x)-f(x-1),\\
{\mathscr D}_q f(x)&=&\dfrac{f(qx)-f(x)}{x(q-1)},\quad q\ne 1,\ x\ne 0.
\end{eqnarray*}
Since the polynomial sequences depend on $n$, $x$ and their 
parameters and we are going to focus more on the variable $n$ along 
this work ($n$ as a discrete variable) 
than in the variable $x$, we need to consider along the paper as well
the operators $\Delta_n$ and $\nabla\!_n$ that are defined 
analogously as the operators $\Delta$ and $\nabla$, i.e. 
\[
\dln f(n;x)=f(n,x)-f(n-1,x),\qquad \Delta_n f(n;x)=f(n+1,x)-f(n,x).
\]
\begin{defn} 
Let ${\bf u}\in \mathbb P^*$ and $\pi \in \mathbb P$, let $\Delta {\bf u}$, 
$\nabla {\bf u}$ and ${\mathscr D}^*_q {\bf u}$ be the linear functionals defined by
\begin{align}
\left\langle \frac d{dx}{\bf u}, \pi\right\rangle =&- \langle {\bf u}, \pi'\rangle,\label{eq:difdef}\\
\langle \Delta {\bf u}, \pi\rangle =&- \langle {\bf u}, \nabla \pi\rangle,\label{eq:backdef}\\
\langle \nabla {\bf u}, \pi\rangle =&- \langle {\bf u}, \Delta \pi\rangle,\label{eq:forwdef}\\
\langle {\mathscr D}^*_q {\bf u}, \pi\rangle =&- q\langle {\bf u}, {\mathscr D}_q \pi\rangle.
\label{eq:qdiffdef}
\end{align}
\end{defn}
Notice that we use the same notation for the operators on $\mathbb P$ and $\mathbb P^*$.
Whenever it is not specified the linear space where an operator acts, it will be understood 
that it acts on the polynomial space $\mathbb P$.
\section{The Laguerre constellation}\label{sec:3}
In this section, we are going to present several identities that 
the polynomials belonging to LC fulfill.
First, we are going to show some theoretical aspects and results related to them.
\begin{lem}\label{lem:3.1}
Let ${\bf u}\in \mathbb P^*$ be a quasi-definite classical functional, let $(p_n)$ be the 
polynomial sequence orthogonal with respect to ${\bf u}$. If $(p_n)$ belong to the  LC
then, there exists a numerical sequence $(\lambda_n)_n$ so that 
$(\lambda_n\, p_n)_n$ fulfills the recurrence relation \eqref{ttrr} for which $\gamma_0=0$ and
$\alpha_n+\beta_n+\gamma_n={\sf c}$ for all $n\in \mathbb N$, 
where ${\sf c}$ is a root of $\phi(x)$, or $\phi^*(x)$.
\end{lem}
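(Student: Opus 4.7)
The plan is to choose $\lambda_n$ so that every $P_n:=\lambda_n p_n$ takes the value $1$ at ${\sf c}$. Once this holds, evaluating the three-term recurrence for $(P_n)_n$ at $x={\sf c}$ immediately gives $\alpha_n+\beta_n+\gamma_n={\sf c}$, while $\gamma_0=0$ is the usual convention forced by $P_{-1}\equiv 0$. Hence the whole content of the lemma reduces to the non-vanishing statement $p_n({\sf c})\ne 0$ for every $n\ge 0$, and this is where the hypothesis $\deg\phi=1$ (or $\deg\phi^*=1$) is genuinely used.

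The two cases are parallel because ${\bf u}$ satisfies both \eqref{peq} and its dual, so I treat only $\deg\phi=1$ and write $\phi(x)=A(x-{\sf c})$ with $A\ne 0$. The structure relation \eqref{eq:1} reads
\[
\phi(x)\,\mathscr D p_n(x)=\widetilde a_n\, p_{n+1}(x)+\widetilde b_n\, p_n(x)+\widetilde c_n\, p_{n-1}(x),\qquad \widetilde c_n\ne 0,
\]
and since $\deg(\phi\,\mathscr D p_n)\le n<n+1=\deg p_{n+1}$, comparison of leading terms forces $\widetilde a_n=0$. A further leading-coefficient comparison identifies $\widetilde b_n$ with $A$ times the lowering factor of $\mathscr D$ ($n$ for the derivative and for $\Delta$, $\nabla$; $[n]_q$ for $\mathscr D_q$), so $\widetilde b_n\ne 0$ for $n\ge 1$.

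Evaluating the resulting two-term relation at $x={\sf c}$ produces $\widetilde b_n\,p_n({\sf c})+\widetilde c_n\,p_{n-1}({\sf c})=0$; starting from $p_0({\sf c})=1$ and using $\widetilde b_n,\widetilde c_n\ne 0$, induction on $n$ gives $p_n({\sf c})\ne 0$ for every $n$. Setting $\lambda_n:=1/p_n({\sf c})$ and rewriting \eqref{ttrr} for $P_n=\lambda_n p_n$ produces new coefficients $A_n=(\lambda_n/\lambda_{n+1})\alpha_n$, $B_n=\beta_n$, $C_n=(\lambda_n/\lambda_{n-1})\gamma_n$; evaluation at $x={\sf c}$ using $P_n({\sf c})=1$ yields $A_n+B_n+C_n={\sf c}$, and $C_0=0$ by the convention $P_{-1}\equiv 0$. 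The main obstacle is the non-vanishing of $p_n({\sf c})$, overcome by the reduction $\widetilde a_n=0$ that collapses the structure relation to a two-term recurrence on the values at ${\sf c}$.
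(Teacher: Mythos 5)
Your proof is correct, but it takes a genuinely different route from the paper. The paper's proof is a classification-plus-verification argument: it lists the ten families of the (basic) hypergeometric scheme with $\deg\phi=1$ or $\deg\phi^*=1$, and for each invokes the known closed-form values $p_n({\sf c})$ at the zero of $\phi$ (or $\phi^*$) from the literature of Koepf--Masjed-Jamei and Petronilho, observing they are nonzero and setting $\lambda_n=1/p_n({\sf c})$. You instead derive the key non-vanishing $p_n({\sf c})\ne 0$ intrinsically: from the structure relation \eqref{eq:1} (which, by the cited characterization theorems of Al-Salam--Chihara, Garc\'ia et al., and Medem et al., holds with the Pearson data $\phi$ and with $\widetilde c_n\ne 0$), the degree count $\deg(\phi\,{\mathscr D}p_n)\le n$ forces $\widetilde a_n=0$, and evaluation at ${\sf c}$ collapses to the two-term recurrence $\widetilde b_n p_n({\sf c})+\widetilde c_n p_{n-1}({\sf c})=0$ with $\widetilde b_n=A[n]\ne 0$, whence non-vanishing by induction. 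This is more conceptual and case-free; what it costs is the reliance on the structure relation being a genuine \emph{characterization} with exactly the Pearson polynomial $\phi$ appearing in it and with $\widetilde c_n\ne 0$, plus the standard non-degeneracy assumptions ($q$ not a root of unity so that $[n]_q\ne 0$, and the analogous statement for the $\phi^*$/${\mathscr D}^*$ case, which you rightly note follows from the dual Pearson equation). What the paper's approach buys is that it simultaneously identifies the explicit membership list of the Laguerre constellation (used later in Table \ref{table:1} and Section \ref{sec:4}), which your argument does not produce. Both establish the lemma; yours would even extend to settings where the explicit classification is not available.
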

\begin{proof}
To prove this result it is enough to consider all the families of the Hypergeometric 
orthogonal polynomials scheme and the basic Hypergeometric orthogonal 
polynomials scheme (see e.g. \cite{mr2656096,mr1850540,mr1421640}) that are 
of Laguerre-type, i.e., $\deg \phi=1$ or $\deg \phi^*=1$. These families are the 
Laguerre {\tt (L)}, Charlier {\tt (C)}, Meixner {\tt (M)}, 
big $q$-Laguerre {\tt (bqL)}, $q$-Meixner {\tt (qM)}, 
little $q$-Laguerre {\tt (lqL)}, $q$-Laguerre {\tt (qL)}, $q$-Charlier {\tt (qC)}, 
Stieltjes-Wigert {\tt (SW)},  and the $0$-Laguerre/Bessel {\tt (0LB)} polynomials

Once we obtain all the families belonging to LC it is enough to verify that the conditions 
established for each of the families (see Table \ref{table:1}) are satisfied. 
Observe that the value of $p_n({\sf c})$, where {\sf c} is a zero of $\phi$ or $\phi^*$, 
is known (see \cite{MR2217787, mr2324843}), and these values are non-zero so 
one can define $\lambda_n$ as $1/p_n({\sf c})$. 
\end{proof} 
\begin{rem}Note that Lemma \ref{lem:3.1} is \underline{not} a Characterization 
Theorem since other families of the basic Hypergeometric orthogonal polynomials 
scheme fulfill the condition about the recurrence coefficients, in short (RC), i.e. 
$\alpha_n+\beta_n+\gamma_n$ is a constant, however they do not belong to the LC, 
for example, the Askey-Wilson polynomials
(see \cite[(14.1.4)]{mr2656096}).
\end{rem}

In the next result we write the recurrence relation as a structure-type relation on $n$.
\begin{lem} \label{lem:3.3}
Let $(p_n)_n$ be a polynomial sequence that belongs to the Laguerre constellation. 
For any $x\in \mathbb C$ the recurrence relation \eqref{ttrr} can be written as:
\begin{equation} \label{eq:sr1} 
\phi(x)p_n(x)=\alpha_n \Delta_n p_n(x)-
\gamma_n \Delta_n p_{n-1}(x),
\end{equation}
if $\deg \phi=1$; and
\[
\phi^*(x)p_n(x)=\alpha_n \Delta_n p_n(x)-
\gamma_n \Delta_n p_{n-1}(x),
\]
if $\deg \phi^*=1$.
\end{lem}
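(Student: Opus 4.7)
My plan is to verify the identity by direct algebraic manipulation, using Lemma~\ref{lem:3.1} as the crucial input. The key observation is that the stated structure relation is essentially just a rearrangement of the three-term recurrence \eqref{ttrr} once one exploits the constraint $\alpha_n+\beta_n+\gamma_n=\mathsf{c}$ provided by Lemma~\ref{lem:3.1}.

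First, I would take the recurrence in the form $(x-\beta_n)p_n(x)=\alpha_n p_{n+1}(x)+\gamma_n p_{n-1}(x)$. Since $(p_n)_n$ is in the Laguerre constellation, by Lemma~\ref{lem:3.1} (after the appropriate rescaling $p_n\mapsto\lambda_n p_n$, which leaves the form of the recurrence \eqref{ttrr} intact) we may write $\beta_n=\mathsf{c}-\alpha_n-\gamma_n$, where $\mathsf{c}$ is the root of the monic $\phi$ (or of $\phi^*$ in the second case). Substituting this into the recurrence gives
\[
(x-\mathsf{c})\,p_n(x)=\alpha_n\bigl(p_{n+1}(x)-p_n(x)\bigr)-\gamma_n\bigl(p_n(x)-p_{n-1}(x)\bigr).
\]

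Next I would recognize the right-hand side as exactly $\alpha_n\Delta_n p_n(x)-\gamma_n\Delta_n p_{n-1}(x)$ by the definition of $\Delta_n$ acting on the discrete variable $n$. Since $\phi$ has degree one with root $\mathsf{c}$, one has $\phi(x)=x-\mathsf{c}$ (monic normalization), and the identity \eqref{eq:sr1} follows. The second statement of the lemma, with $\phi^*$, is proved by exactly the same computation with $\mathsf{c}$ chosen as a root of $\phi^*$, which is again available from Lemma~\ref{lem:3.1}.

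The proof is essentially an identity-juggling argument, and no real obstacle is expected: the only subtle point is being explicit that the hypothesis ``$(p_n)_n$ belongs to LC'' allows one to invoke Lemma~\ref{lem:3.1} and hence to replace $\beta_n$ by $\mathsf{c}-\alpha_n-\gamma_n$. If the author wishes to state the result without the rescaling (i.e., for an arbitrary orthonormalization of the family), then one must also rescale $\phi$ so that it is monic with root $\mathsf{c}$; otherwise a global multiplicative constant appears on both sides and cancels.
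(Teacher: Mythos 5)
Your proposal is correct and follows exactly the paper's own argument: invoke Lemma~\ref{lem:3.1} to replace $\beta_n$ by $\mathsf{c}-\alpha_n-\gamma_n$ in the three-term recurrence and regroup the terms as $\alpha_n\Delta_n p_n-\gamma_n\Delta_n p_{n-1}$. You merely spell out the algebra that the paper leaves as ``rewriting the recurrence,'' and your closing remark about the monic normalization of $\phi$ is a reasonable (implicit in the paper) point of care.
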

\begin{proof} 
Let us assume that $\deg \phi=1$, i.e., $\phi(x)=x-{\sf c}$, then by Lemma \ref{lem:3.1} 
we have that the coefficients of the recurrence relation \eqref{ttrr} fulfill 
$\alpha_n+\beta_n+\gamma_n={\sf c}$, then rewriting the recurrence assuming 
this relation the result holds. The $\deg \phi^*=1$ case is analogous.
\end{proof} 
\begin{rem} 
For the sake of convenience, we are going to replace $x-{\sf c}$ by $\phi(x)$ 
assuming that $\deg \phi=1$. In the case that $\deg \phi\ne 1$ and 
$\deg \phi^*= 1$, then one must replace $\phi$ by $\phi^*$ in the 
further results since such identities and results hold similarly.
 \end{rem}
We write the recurrence relation in the next result as a Sturm-Liouville form 
difference equation on $n$.
\begin{lem}\label{lem:3.4}
Let $(p_n)_n$  be a polynomial sequence that belongs to the Laguerre constellation. 
For any $x\in \mathbb C$ the recurrence relation \eqref{ttrr} can be written as:
\begin{eqnarray} \label{eq:3.1}
\phi(x) p_n(x)&=&d^2_n \dln \dfrac{\gamma_n}{d_n^2}\Delta_n p_n(x),\\[3mm]
&=&d^2_n \Delta_n \dfrac{\alpha_n}{d_n^2}\dln p_n(x).\label{eq:3.2}
\end{eqnarray}
\end{lem}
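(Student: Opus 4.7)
The proof is a direct algebraic verification that the two right-hand sides in (3.1) and (3.2) both collapse to the expression for $\phi(x) p_n(x)$ already established in Lemma \ref{lem:3.3}. First, since $\Delta_n p_{n-1}(x) = p_n(x) - p_{n-1}(x) = \nabla_n p_n(x)$, Lemma \ref{lem:3.3} may be restated as
\[
\phi(x) p_n(x) = \alpha_n \Delta_n p_n(x) - \gamma_n \nabla_n p_n(x).
\]
This is the target identity that the two claimed Sturm--Liouville forms must match.

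The decisive auxiliary fact is the norm--coefficient relation \eqref{gamrel}, which I would recast in the ratio form $\gamma_n / d_n^2 = \alpha_{n-1} / d_{n-1}^2$. This says that the sequences $n \mapsto \alpha_n/d_n^2$ and $n \mapsto \gamma_n/d_n^2$ differ exactly by an index shift, and it is precisely this identity that allows the coefficients appearing after the outer $\nabla_n$ or $\Delta_n$ is expanded (which produces both an ``$n$-indexed'' and a shifted term) to be matched to the simple $\alpha_n$ and $\gamma_n$ of the target expression.

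Concretely, for \eqref{eq:3.1} I would introduce $F(n) := \frac{\gamma_n}{d_n^2} \Delta_n p_n(x)$ and compute $d_n^2 \nabla_n F(n) = d_n^2 [F(n) - F(n-1)]$ term by term, then apply \eqref{gamrel} to rewrite the shifted ratio and collapse the result onto the target. For \eqref{eq:3.2} the same scheme applies, with $G(n) := \frac{\alpha_n}{d_n^2} \nabla_n p_n(x)$ and the outer $\Delta_n$ expanded as $G(n+1) - G(n)$, after which \eqref{gamrel} is invoked at the shifted index.

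The argument is essentially bookkeeping and does not rely on any structural fact beyond Lemma \ref{lem:3.3} and \eqref{gamrel}. The only subtlety, which is therefore the main risk of error rather than a real obstacle, is keeping the index shifts consistent in each of the two summands produced by $\nabla_n$ or $\Delta_n$ acting on the product ``coefficient $\times$ inner difference'', so that the orthogonality ratio \eqref{gamrel} is invoked at precisely the right place.
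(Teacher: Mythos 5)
Your plan --- expand the outer difference in $n$ and then invoke \eqref{gamrel} in the ratio form $\gamma_n/d_n^2=\alpha_{n-1}/d_{n-1}^2$ to collapse onto the restatement of Lemma \ref{lem:3.3} --- is exactly the recipe the paper itself gives, so in spirit you and the author agree. However, the index bookkeeping that you dismiss as ``the main risk of error rather than a real obstacle'' is precisely where the proposal breaks down for the identities as printed. Carrying out your own computation for \eqref{eq:3.1}: with $F(n)=\frac{\gamma_n}{d_n^2}\Delta_n p_n(x)$ one gets
\[
d_n^2\,\dln F(n)=\gamma_n\bigl(p_{n+1}(x)-p_n(x)\bigr)-\frac{d_n^2}{d_{n-1}^2}\,\gamma_{n-1}\bigl(p_n(x)-p_{n-1}(x)\bigr),
\]
whose leading term is $\gamma_n\Delta_n p_n(x)$, while the target $\alpha_n\Delta_n p_n(x)-\gamma_n\dln p_n(x)$ has leading term $\alpha_n\Delta_n p_n(x)$; the relation \eqref{gamrel} ties $\gamma_n/d_n^2$ to $\alpha_{n-1}/d_{n-1}^2$ and cannot convert $\gamma_n$ into $\alpha_n$. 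The same mismatch occurs in your treatment of \eqref{eq:3.2}, where the second summand comes out as $-\alpha_n\dln p_n(x)$ instead of $-\gamma_n\dln p_n(x)$. A concrete check: for the normalized Laguerre polynomials at $n=1$ the right-hand side of \eqref{eq:3.1} evaluates to $\frac{x}{\alpha+1}-\frac{x^2}{(\alpha+1)(\alpha+2)}$, whereas $x\,p_1(x)=x-\frac{x^2}{\alpha+1}$.

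What your computation actually establishes, once done carefully, is the transposed pair
\[
\phi(x)p_n(x)=d_n^2\,\dln\frac{\alpha_n}{d_n^2}\Delta_n p_n(x)=d_n^2\,\Delta_n\frac{\gamma_n}{d_n^2}\dln p_n(x),
\]
since $d_n^2\,\dln\bigl[\frac{\alpha_n}{d_n^2}\Delta_n p_n\bigr]=\alpha_n\Delta_n p_n-d_n^2\frac{\alpha_{n-1}}{d_{n-1}^2}\dln p_n=\alpha_n\Delta_n p_n-\gamma_n\dln p_n$ by \eqref{gamrel}, and analogously for the other form using \eqref{gamrel} at index $n+1$. This is exactly the pairing written out in Lemma \ref{lem:3.7} for the Meixner case, so the statement of Lemma \ref{lem:3.4} has the roles of $\dln$ and $\Delta_n$ (equivalently, of $\alpha_n$ and $\gamma_n$) interchanged. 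Your argument would be complete and correct against that corrected statement; as it stands, the asserted ``collapse onto the target'' does not occur, and you should either exhibit the expansion explicitly (which reveals the discrepancy) or correct the identity you are proving.
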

\begin{proof} 
Starting from \eqref{eq:sr1}, using \eqref{gamrel} and taking into account the 
definition of $\nabla\!_n$ and $\Delta_n$ the results follow.
\end{proof}
\begin{thm}\label{thm:3.1}
Let $(p_n)_n$  be a polynomial sequence that belongs to the Laguerre constellation. 
For any $x\in \mathbb C$, the sequence $(\dln p_n(x))$ is orthogonal with respect to $\alpha_n/d_n^2$, 
and the sequence $(\Delta_n p_n(x))$ is orthogonal with respect to $\gamma_n/d_n^2$.
\end{thm}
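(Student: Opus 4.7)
The plan is to derive both orthogonalities from the Sturm--Liouville-form identities of Lemma~\ref{lem:3.4} by an Abel (discrete summation by parts) argument in the index $n$. Fix $x,y\in\mathbb C$. Dividing \eqref{eq:3.2} by $d_n^2$, multiplying by $p_n(y)$, and summing over $n\in\mathbb N_0$ yields
\[
\phi(x)\sum_{n}\frac{p_n(x)p_n(y)}{d_n^2}
=\sum_{n}p_n(y)\,\Delta_n\!\left(\frac{\alpha_n}{d_n^2}\nabla_n p_n(x)\right).
\]
Summation by parts on the right-hand side transfers $\Delta_n$ onto $p_n(y)$; the lower boundary contribution vanishes thanks to $\gamma_0=0$ (supplied by Lemma~\ref{lem:3.1}) together with the convention $p_{-1}=0$. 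This leaves
\[
\phi(x)\sum_{n}\frac{p_n(x)p_n(y)}{d_n^2}
=-\sum_{n}\frac{\alpha_n}{d_n^2}\nabla_n p_n(x)\,\nabla_n p_n(y)+B(x,y),
\]
where $B(x,y)$ is the upper boundary term.

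The quadratic sum on the right is symmetric in $(x,y)$, so swapping $x\leftrightarrow y$ and subtracting produces the Christoffel--Darboux-type identity $(\phi(x)-\phi(y))\sum_n p_n(x)p_n(y)/d_n^2 = B(x,y)-B(y,x)$. For each family in LC the standard large-$n$ asymptotics of $p_n(x)$ and $d_n^2$ force the right side to vanish in the limit, recovering the classical dual-orthogonality relation $\sum_n p_n(x)p_n(y)/d_n^2=0$ for $\phi(x)\ne\phi(y)$. Since $\phi$ is linear on LC by Lemma~\ref{lem:3.1}, $\phi(x)=\phi(y)$ is equivalent to $x=y$, and substituting back into the previous display gives
\[
\sum_{n}\frac{\alpha_n}{d_n^2}\nabla_n p_n(x)\,\nabla_n p_n(y)=0,\qquad x\neq y,
\]
which is precisely the claimed orthogonality of $(\nabla_n p_n(x))_n$ with respect to $\alpha_n/d_n^2$. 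The orthogonality of $(\Delta_n p_n(x))_n$ with respect to $\gamma_n/d_n^2$ follows identically, starting from \eqref{eq:3.1} instead of \eqref{eq:3.2}.

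The principal technical obstacle lies in handling the boundary term $B(x,y)-B(y,x)$ as one passes to the limit $N\to\infty$ (or at the natural cutoff for the finite-support families). This is where the Laguerre-constellation-specific data collected in Lemma~\ref{lem:3.1} and Section~\ref{sec:4} enter: a per-family verification of the asymptotic behavior of $p_n$ and $d_n^2$ is required, covering each of {\tt L}, {\tt C}, {\tt M}, {\tt bqL}, {\tt qM}, {\tt lqL}, {\tt qL}, {\tt qC}, {\tt SW}, and {\tt 0LB} separately, although in every case the needed estimates are classical.
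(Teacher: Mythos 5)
Your argument is correct in substance and follows essentially the same route as the paper: both proofs divide the Sturm--Liouville form \eqref{eq:3.2} (resp.\ \eqref{eq:3.1}) by $d_n^2$, multiply by $p_n(y)$, sum over $n$, apply Abel summation by parts to move the outer difference onto $p_n(y)$, and conclude from the vanishing of $\sum_n p_n(x)p_n(y)/d_n^2$ for $y\ne x$. The one genuine divergence is how that vanishing is justified: the paper invokes Korovkin's closure relation $\sum_n p_n(x)p_n(y)/d_n^2=\delta(x-y)/\omega(x)$ as an external input (and, for $x=y$, reads off the normalization $\phi(x)/\omega(x)$, which your version does not recover), whereas you re-derive the $y\ne x$ statement internally via a Christoffel--Darboux antisymmetrization of your partial-sum identity. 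Your route is more self-contained, but it concentrates all the analytic weight in the claim that the boundary terms vanish as $N\to\infty$, which is precisely the content of the closure relation and is asserted rather than proved in both treatments; note also that your final substitution requires $B(x,y)\to 0$ itself, not merely $B(x,y)-B(y,x)\to 0$, so the per-family asymptotic check must be carried out for the boundary term, not only for its antisymmetrization. A small bookkeeping caveat: in the $\dln$/$\alpha_n$ version the $n=0$ term is not a vanishing lower boundary contribution but is absorbed into the sum because $\dln p_0=p_0-p_{-1}=1$; the condition $\gamma_0=0$ (equivalently $1/d_{-1}^2=0$, as the paper remarks) is what kills the lower boundary only in the $\Delta_n$/$\gamma_n$ version.
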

\begin{proof} 

Let us fix $x\in \mathbb C$ such that $\omega(x)\ne 0$. 
By Korovkin’s Theorem (see \cite{MR0150565} or 
\cite[Theorem 2.1]{MR4496948})
we have the following closure relation:
\[
\sum_{n=0}^\infty \dfrac{p_n(x)p_n(y)}{d_n^2}=\dfrac 1{\omega(x)}\delta(x-y),
\]
where $\omega$ is the weight function and both the left- and right-hand sides 
should be treated as distributions. From this expression, 
using \eqref{eq:3.2} and Abel's lemma on partial sums 
\cite[p. 313]{ZBMATH2576281} for $y\ne x$ we obtain 
\[
0=\sum_{n=0}^\infty \dfrac{\phi(x)p_n(x)p_n(y)}{d_n^2}=
\sum_{n=0}^\infty \left(\Delta_n\dfrac{\alpha_n}{d_n^2}\dln p_n(x)\right) p_n(y)
=-\sum_{n=0}^\infty \dfrac{\alpha_n}{d_n^2}\dln p_n(x) \dln p_n(y).
\]
Moreover, 
\[
\sum_{n=0}^\infty \dfrac{\alpha_n}{d_n^2}\dln p_n(x) \dln p_n(x)
=\frac{\phi(x)}{\omega(x)}.
\]
The proof for the sequence $(\Delta_n p_n(x))$ is analogous. Hence the result follows.
\end{proof} 
\begin{rem}
Since we have the data for all the families which belong to the LC  it 
is a straightforward calculation to check that $1/d^2_{-1}=0$ 
(since $\gamma_0=0$) and 
\[
\lim_{n\to \infty} \frac{\alpha_n}{d_n^2}=0.
\]
\end{rem} 
\begin{rem}
Observe the previous result does not characterize families belonging 
to LC since, for example, for the Jacobi polynomials the following identity holds:
\[
\dln \left(\dfrac{P_n^{(\alpha,\beta)}(x)}{P_n^{(\alpha,\beta)}(1)}\right)=
\dfrac{P_n^{(\alpha,\beta)}(x)}{P_n^{(\alpha,\beta)}(1)}-
\dfrac{P_{n-1}^{(\alpha,\beta)}(x)}{P_{n-1}^{(\alpha,\beta)}(1)}=
\dfrac{2n+\alpha+\beta}{2(1+a)}(x-1)\dfrac{P_{n-1}^{(\alpha+1,\beta)}(x)}
{P_{n-1}^{(\alpha+1,\beta)}(1)},
\]
and in such a case $\phi(x)=1-x^2$.
\end{rem}
\begin{thm}[Characterization Theorem]\label{thm:3.2}
Let $(p_n)_n$ be an orthogonal polynomial sequence with respect to $\omega$,
such that ${\mathscr D}(\phi\omega)=\psi\omega$
. 
For any $x\in\mathbb C$ such that $\omega(x)\ne 0$, the following statements 
are equivalent: 
\begin{enumerate}
\item $(p_n)_n$ belongs to the Laguerre constellation.
\item The polynomial sequence $(p_n(x))$ fulfills the second order 
difference equation
\[
\phi(x)p_n(x)=\alpha_n \Delta_n p_n(x)-
\gamma_n \Delta_n p_{n-1}(x),
\]
which is equal to its discrete structure-type relation.
\item The polynomial sequence $(p_n(x))$ satisfies the Sturm-Liouville difference equations 
\[
\phi(x) p_n(x)=d^2_n \dln \dfrac{\gamma_n}{d_n^2}\Delta_n p_n(x)=
d^2_n \Delta_n \dfrac{\alpha_n}{d_n^2}\dln p_n(x).
\]
\end{enumerate}
\end{thm}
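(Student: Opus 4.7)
The plan is to close the circle $(1)\Rightarrow(2)\Rightarrow(3)\Rightarrow(1)$, with the first two arrows supplied directly by previous results: $(1)\Rightarrow(2)$ is Lemma \ref{lem:3.3}, and $(2)\Rightarrow(3)$ is Lemma \ref{lem:3.4}. The substantive content lies in the reverse direction, which I would organize as $(3)\Rightarrow(2)\Rightarrow(1)$.

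For $(3)\Rightarrow(2)$, I would expand one of the Sturm--Liouville expressions, say $d_n^2\Delta_n\bigl(\tfrac{\alpha_n}{d_n^2}\dln p_n(x)\bigr)$, by writing $\Delta_n$ and $\dln$ as forward/backward differences in $n$ and invoking the orthogonality identity \eqref{gamrel} (and its shifted version) to re-pair the $d_{n\pm1}^2$ factors with $\alpha_{n\pm 1}$ and $\gamma_{n\pm 1}$. After collapsing these factors, the right-hand side becomes $\alpha_n\Delta_n p_n(x)-\gamma_n\Delta_n p_{n-1}(x)$, which is exactly the right-hand side of the equation in (2). This is essentially the computation of Lemma \ref{lem:3.4} run in reverse.

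The heart of the theorem is $(2)\Rightarrow(1)$. Starting from $\phi(x)p_n(x)=\alpha_n\Delta_n p_n(x)-\gamma_n\Delta_n p_{n-1}(x)$, I would expand the right-hand side as $\alpha_n p_{n+1}(x)+\gamma_n p_{n-1}(x)-(\alpha_n+\gamma_n)p_n(x)$ and apply the three-term recurrence \eqref{ttrr} to replace $\alpha_n p_{n+1}(x)+\gamma_n p_{n-1}(x)$ by $(x-\beta_n)p_n(x)$. This collapses the equation to the polynomial identity
\[
\phi(x)p_n(x)=\bigl(x-(\alpha_n+\beta_n+\gamma_n)\bigr)p_n(x).
\]
Since $p_n$ does not vanish identically and the identity holds on the infinite set $\{x:\omega(x)\ne 0\}$, the polynomial identity theorem yields $\phi(x)=x-(\alpha_n+\beta_n+\gamma_n)$ as polynomials in $x$. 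Because the left-hand side does not depend on $n$, the scalar $\alpha_n+\beta_n+\gamma_n$ must be a common constant ${\sf c}$, and $\phi(x)=x-{\sf c}$ has degree one. Combined with the standing hypothesis ${\mathscr D}(\phi\omega)=\psi\omega$, this is exactly the condition that $(p_n)_n$ belongs to the Laguerre constellation (Definition \ref{def:1.2}). The companion case $\deg\phi^*=1$ is treated by running the same argument on the second variant of the equation appearing in (2), using the equivalent Pearson-type equation for ${\bf u}$ recalled in the opening remark of Section \ref{sec1}.

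The main obstacle is purely one of bookkeeping in the $(3)\Rightarrow(2)$ step: aligning each shifted quotient $\alpha_{n\pm 1}/d_{n\pm 1}^2$ and $\gamma_{n\pm 1}/d_{n\pm 1}^2$ with the correct $d_n^2$ prefactor via \eqref{gamrel}, so that the apparent ``cross terms'' disappear and one recovers $\alpha_n$ and $\gamma_n$ cleanly. The algebraic core of $(2)\Rightarrow(1)$ is, by contrast, just the three-term recurrence together with a degree comparison, and no further hypotheses on $\psi$ or on the particular differential/difference operator ${\mathscr D}$ enter the argument.
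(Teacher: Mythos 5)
Your proposal is correct and follows essentially the same route as the paper: the equivalence $1\leftrightarrow 2$ via the three-term recurrence (Lemma \ref{lem:3.3} and the observation that the structure relation forces $\phi(x)=x-(\alpha_n+\beta_n+\gamma_n)$), and $2\leftrightarrow 3$ via Lemma \ref{lem:3.4}. The paper's own proof is only a two-line assertion, so your writeup simply supplies the details—in particular the $(2)\Rightarrow(1)$ degree comparison—that the published argument leaves implicit.
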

\begin{proof}
$1.\leftrightarrow 2.$ since the 2rd item is the recurrence relation 
in such a case. 
$2.\leftrightarrow 3.$  is a consequence of Lemma \ref{lem:3.4}.
\end{proof} 

\section{The families}\label{sec:4}
Along this section we present several identities related 
to the different families that belong to LC. 
Since there are some relation among them (see Figure 
\ref{fig:1}) we will present such identities only for some 
of the families in order to avoid duplicities. We consider 
the Laguerre, Meixner, Charlier, big $q$-Laguerre, 
little $q$-Laguerre and Stieltjes-Weigert cases.

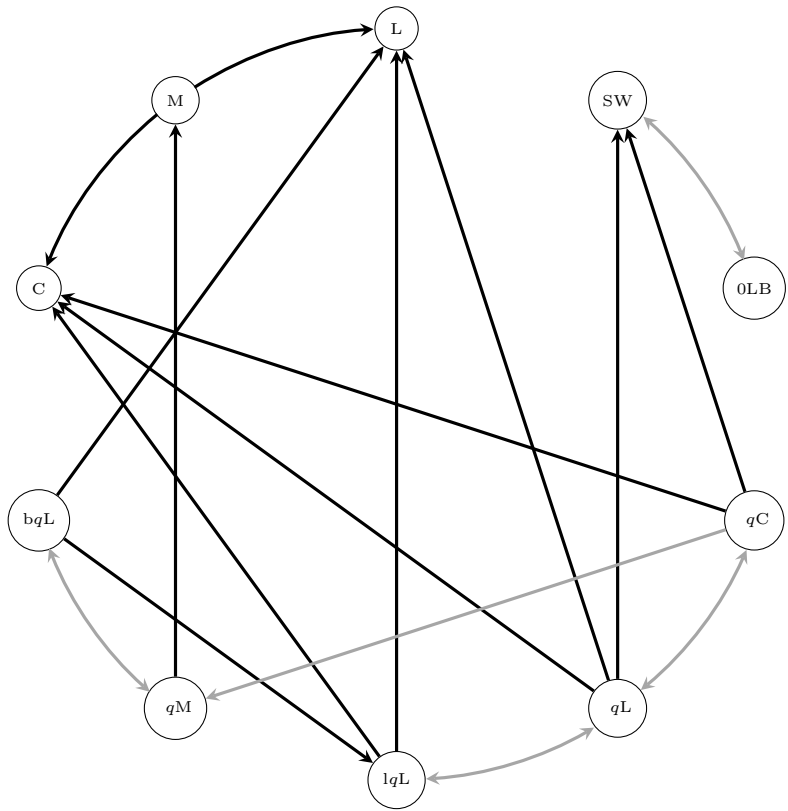
\begin{figure}[!hbt]
\centering
\begin{tikzpicture}
\def \n {10} 
\def \ph {90}. 
\def \radius {5cm}
\def \margin {3.5} 
\foreach \s/\nam in {1/L,2/M,3/C,4/b$q$L,5/ $q$M,6/l$q$L,7/ $q$L,8/ $q$C,9/0LB,10/SW}
{
 \node[draw, circle](\s) at ({360/\n * (\s-1)+\ph}:\radius) {\tiny \nam};
}
\draw [very thick, stealth-] 
(1) -- (4); 
\draw [very thick, -stealth] 
(6) -- (1) ;
\draw [very thick, -stealth] 
(7) -- (1);
\draw [very thick, -stealth] 
(5) -- (2);
\draw [very thick, -stealth] 
(6) -- (3);
\draw [very thick, -stealth] 
(7) -- (3);
\draw [very thick, -stealth] 
(8) -- (3);
\draw [very thick, stealth-] 
(6) -- (4);
\draw [very thick, -stealth] 
(7) -- (10);
\draw [very thick, -stealth] 
(8) -- (10)
; 
\draw [very thick, gray!70,-stealth] 
(8) -- (5) 
;
\draw[stealth-, very thick, >=latex] ({360/\n * (1-1)+\ph+\margin}:\radius) 
arc ({360/\n * (1-1)+\margin+\ph}:{360/\n * (1)-\margin+\ph}:\radius);
\draw[-stealth, very thick, >=latex] ({360/\n * (2-1)+\ph+\margin}:\radius) 
arc ({360/\n * (2-1)+\margin+\ph}:{360/\n * (2)-\margin+\ph}:\radius);
\draw[stealth-stealth, very thick, >=latex,gray!70] ({360/\n * (4-1)+\ph+\margin+1}:\radius) 
arc ({360/\n * (4-1)+\margin+\ph}:{360/\n * (4)-\margin-2 +\ph}:\radius);
\draw[stealth-stealth, very thick, >=latex,gray!70] ({360/\n * (6-1)+\ph+\margin+1}:\radius) 
arc ({360/\n * (6-1)+\margin+\ph}:{360/\n * (6)-\margin-2 +\ph}:\radius);
\draw[stealth-stealth, very thick, >=latex,gray!70] ({360/\n * (7-1)+\ph+\margin+1}:\radius) 
arc ({360/\n * (7- 1)+\margin+\ph}:{360/\n * (7)-\margin-2 +\ph}:\radius);
\draw[stealth-stealth, very thick, >=latex,gray!70] ({360/\n * (9-1)+\ph+\margin+1}:\radius) 
arc ({360/\n * (9- 1)+\margin+\ph}:{360/\n * (9)-\margin-2 +\ph}:\radius);
\end{tikzpicture}
\caption{Relations between the families in the Laguerre constellation. \label{fig:1} 
The gray lines are the particular cases. 
The black lines are the limiting cases.}
\end{figure}
Before presenting the main results let us show the relations between the families 
we are going to consider with respect to the rest of the families that belong to LC 
(see \cite[p. 20]{Jacob.tPhD}, \cite[remark p. 526]{mr2656096}):
\begin{align}
p_n(x;a,b,1/q)=&\dfrac{1}{(q/b;q)_n}M_n(xq/a;1/a,-b;q),\\
p_n(x;q^\alpha|1/q)=&\dfrac{(q;q)_n}{(q^{\alpha+1};q)_n} L_n^{(\alpha)}(-x;q),\\
L_n^{(\alpha)}(x;q)=&\dfrac{1}{(q;q)_n}C_n(-x;-q^{-\alpha};q),\\
S_n(x/a,1/q)=&{}_2\phi_0\left(q^{-n},0;\--;q,-\frac xa\right)=:l_n(x;a),
\end{align}
where $l_n(x;a)$ denote the 0-Laguerre/Bessel polynomials $(l_n(x;a))$ (see \cite{MR1850541},
\cite[p. 244]{mr1421640}).
\subsection{The Laguerre poynomials}
The Laguerre polynomials can be written in terms of 
hypergeometric series as \cite[\S 9.12]{mr2656096}
\[
L_n^{(\alpha)}(x)=\dfrac{(\alpha+1)_n}{n!}\Fhyp{1}{1}{-n}{\alpha+1}{x}.
\]
In this case $\phi(x)=x$, i.e., this family belongs to the Laguerre constellation  
where the zero of $\phi$ is ${\sf c}=0$, and 
$L_n^{(\alpha)}({\sf c})=(\alpha+1)_n/n!$.
Taking this into account we can state the following result.
\begin{lem} \label{lem:3.5}
For any $\alpha\in \mathbb C$ and any $n\in \mathbb N_0$, 
the following identities hold:
\begin{align}
\dln L_n^{(\alpha)}(x)=&L_n^{(\alpha-1)}(x),\label{lownL}\\
\alpha L_n^{(\alpha)}(x)-(n+1)\Delta_n L_n^{(\alpha)}(x)=&xL_n^{(\alpha+1)}(x),\label{raiupnL}\\
\alpha L_n^{(\alpha)}(x)-(n+\alpha)\dln L_n^{(\alpha)}(x)=&xL_{n-1}^{(\alpha+1)}(x),\label{raidonL}\\
\dlpa{\alpha} L_n^{(\alpha)}(x)=&L_{n-1}^{(\alpha)}(x),\label{lowaL}\\
(n+\alpha-x) L_n^{(\alpha)}(x)-(\alpha+n)\dlpa{\alpha} L_n^{(\alpha)}(x)=&(n+1) L_{n+1}^{(\alpha-1)}(x),\label{raidoaL}\\
(n+1+\alpha-x) L_{n}^{(\alpha)}(x)-x\Delta_\alpha L_{n}^{(\alpha)}(x)=&(n+1) L_{n+1}^{(\alpha)}(x),\label{raiupaL}
\end{align}
where $\Delta_\alpha f(x,\alpha)=f(x,\alpha+1)-f(x,\alpha)$, and $\dlpa{\alpha} f(x,\alpha)=\Delta_\alpha f(x,\alpha-1)$.
\end{lem}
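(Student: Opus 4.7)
The plan is to reduce the six identities to two genuinely independent ones, \eqref{lownL} and \eqref{raiupnL}; the remaining four will follow by bookkeeping together with the classical three-term recurrence
\[
(n+1)L_{n+1}^{(\alpha)}(x)=(2n+\alpha+1-x)L_n^{(\alpha)}(x)-(n+\alpha)L_{n-1}^{(\alpha)}(x).
\]
Indeed, \eqref{lowaL} is nothing but \eqref{lownL} rewritten via $\dlpa{\alpha} L_n^{(\alpha)}(x)=L_n^{(\alpha)}(x)-L_n^{(\alpha-1)}(x)$; \eqref{raiupaL} is \eqref{raiupnL} rewritten via $\Delta_\alpha L_n^{(\alpha)}(x)=L_n^{(\alpha+1)}(x)-L_n^{(\alpha)}(x)$ (the $-x L_n^{(\alpha)}$ cancels cleanly against $(n+1+\alpha-x)L_n^{(\alpha)}$); \eqref{raidonL} is the shift $n\mapsto n-1$ of \eqref{raiupnL} after expanding $\dln L_n^{(\alpha)}(x)=L_n^{(\alpha)}(x)-L_{n-1}^{(\alpha)}(x)$; and \eqref{raidoaL}, after one application of \eqref{lowaL} on the left and one application of \eqref{lownL} (with $n$ shifted up) on the right, reduces verbatim to the three-term recurrence above.

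For the two seed identities I would argue directly from the hypergeometric expansion $L_n^{(\alpha)}(x)=\frac{(\alpha+1)_n}{n!}\,\Fhyp{1}{1}{-n}{\alpha+1}{x}$ by comparing the $x^k$-coefficients on the two sides. For \eqref{lownL} the comparison collapses to Pascal's rule
\[
\binom{n+\alpha}{n-k}-\binom{n-1+\alpha}{n-1-k}=\binom{n-1+\alpha}{n-k}.
\]
For \eqref{raiupnL}, once unfolded as $(n+1)L_{n+1}^{(\alpha)}(x)=(n+1+\alpha)L_n^{(\alpha)}(x)-xL_n^{(\alpha+1)}(x)$, the coefficient identity reduces to
\[
(n+1)\binom{n+1+\alpha}{n+1-k}=(n+1+\alpha)\binom{n+\alpha}{n-k}+k\binom{n+\alpha+1}{n+1-k},
\]
which is verified by one application of $\binom{m}{j+1}=\frac{m-j}{j+1}\binom{m}{j}$ after inserting Pascal's rule in the last term.

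The main obstacle I anticipate is purely organizational: recognising at the outset that \eqref{lownL} and \eqref{raiupnL} already carry all the genuinely new content, so that the essential work sits in two short coefficient-matching calculations rather than in six parallel series expansions. Once the chains of equivalence above are laid out, the remaining identities are routine reindexing and operator-expansion steps, and no new analytical input is required beyond the three-term recurrence, which itself is a standard consequence of the same hypergeometric representation.
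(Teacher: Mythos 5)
Your proposal is correct, and it organizes the work differently from the paper. The paper's own proof is a one-line brute-force verification: each of the six identities is checked independently by identifying polynomial coefficients on both sides (with Mathematica code in the appendix doing the matching). You use the same underlying tool --- coefficient comparison against $L_n^{(\alpha)}(x)=\sum_{k}(-1)^k\binom{n+\alpha}{n-k}x^k/k!$ --- but only for the two seeds \eqref{lownL} and \eqref{raiupnL}, and both of your binomial reductions (Pascal's rule for the first, the absorption identity for the second) check out. The four remaining derivations are also sound: \eqref{lowaL} and \eqref{raiupaL} are operator rewritings of the seeds, \eqref{raidonL} is \eqref{raiupnL} at $n-1$ (with the usual convention $L_{-1}^{(\beta)}\equiv 0$ covering $n=0$), and your reduction of \eqref{raidoaL} to the three-term recurrence is a correct computation: expanding via \eqref{lowaL} on the left and $L_{n+1}^{(\alpha-1)}=L_{n+1}^{(\alpha)}-L_{n}^{(\alpha)}$ on the right yields exactly $(2n+\alpha+1-x)L_n^{(\alpha)}(x)-(n+\alpha)L_{n-1}^{(\alpha)}(x)=(n+1)L_{n+1}^{(\alpha)}(x)$. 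What your organization buys is an explicit picture of the logical dependencies that the paper leaves implicit; what it costs is the extra input of the recurrence, which in fact you do not need: expanding $\dlpa{\alpha}$ directly in \eqref{raidoaL} turns its left-hand side into $(n+\alpha)L_n^{(\alpha-1)}(x)-xL_n^{(\alpha)}(x)$, which is the unfolded \eqref{raiupnL} with $\alpha\mapsto\alpha-1$, so the entire lemma already rests on \eqref{lownL} and \eqref{raiupnL} alone.
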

\begin{proof} These identities can be checked by identifying the 
polynomial coefficients on the left and right-hand sides of each identity. 
Hence, the results follow.
\end{proof} 
A direct consequence of the former result is stated as follows.
\begin{thm} \label{thm:3.3}
For any $\alpha\in \mathbb C$, any $x\in \mathbb C$, $x\ne 0$, and any $n\in \mathbb N_0$, 
the polynomial sequence $\big(L_n^{(\alpha+k)}(x)\big)_k$ is orthogonal with respect to 
certain moment functional.
\end{thm}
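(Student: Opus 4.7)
The plan is to combine the identities of Lemma~\ref{lem:3.5} with the standard Laguerre three-term recurrence in the index $n$ in order to produce a three-term recurrence in the index $k$ for the numerical sequence $q_k := L_n^{(\alpha+k)}(x)$, and then invoke Favard's theorem (in the form quoted right after \eqref{ttrr}) to conclude the existence of a moment functional.

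First I would apply \eqref{lowaL} at the shifted parameter $\alpha+k$ to obtain $L_{n-1}^{(\alpha+k)}(x) = \dlpa{\alpha} L_n^{(\alpha+k)}(x) = q_k - q_{k-1}$, and \eqref{raiupaL} at the same shift to obtain $(n+1)\,L_{n+1}^{(\alpha+k)}(x) = (n+1+\alpha+k)\,q_k - x\,q_{k+1}$. These two identities express both neighbouring Laguerre values $L_{n\pm 1}^{(\alpha+k)}(x)$ in terms of three consecutive members of the sequence $(q_k)_k$.

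Next I substitute the expression for $L_{n-1}^{(\alpha+k)}(x)$ into the classical Laguerre recurrence $(n+1)L_{n+1}^{(\alpha+k)}(x) = (2n+1+\alpha+k-x)L_n^{(\alpha+k)}(x) - (n+\alpha+k)L_{n-1}^{(\alpha+k)}(x)$. The two $q_k$-contributions on the right-hand side combine into a single $(n+1-x)\,q_k$ term, and equating the result with the expression for $(n+1)L_{n+1}^{(\alpha+k)}(x)$ coming from \eqref{raiupaL} yields, after a short simplification,
\[
x\,q_{k+1} = (x+\alpha+k)\,q_k - (n+\alpha+k)\,q_{k-1},\qquad k\ge 1,
\]
together with the initial data $q_0 = L_n^{(\alpha)}(x)$ and $q_1 = L_n^{(\alpha+1)}(x)$.

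Since $x\ne 0$ by hypothesis, and $n+\alpha+k\ne 0$ for generic $\alpha$, the recurrence is non-degenerate and of the form \eqref{ttrr}; Favard's theorem then supplies a uniquely determined moment functional with respect to which the associated orthogonal polynomial sequence (of which $(q_k)_k$ is the value sequence) is orthogonal, establishing the claim. The main obstacle I anticipate is the algebraic bookkeeping in the substitution step: the $q_k$-coefficients coming from \eqref{lowaL} and from \eqref{raiupaL} must combine exactly to $(x+\alpha+k)$, and a sign or indexing slip would destroy that cancellation and obscure the role of the hypothesis $x\ne 0$, which is precisely what renders Favard applicable.
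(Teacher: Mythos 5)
Your recurrence $x\,q_{k+1}=(x+\alpha+k)\,q_k-(n+\alpha+k)\,q_{k-1}$ is correct, and it is in fact the paper's own key identity in disguise: expanding the operators in \eqref{eq:sodeL1} gives exactly $x\,L_n^{(\alpha+1)}(x)=(x+\alpha)\,L_n^{(\alpha)}(x)-(n+\alpha)\,L_n^{(\alpha-1)}(x)$. So the two arguments coincide up to that point (you reach the identity from \eqref{lowaL}, \eqref{raiupaL} and the three-term recurrence in $n$, while the paper combines \eqref{lowaL}, \eqref{raidoaL} and \eqref{raiupaL}), and they diverge only in how the identity is read: the paper treats it as an eigenvalue equation in the variable $\alpha$ with eigenvalue $n$ and invokes discrete Sturm--Liouville theory, whereas you treat it as a three-term recurrence in $k$ and invoke Favard.

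It is the Favard step that has a genuine gap. Favard's theorem applies to a recurrence $y\,p_k(y)=a_k p_{k+1}(y)+b_k p_k(y)+c_k p_{k-1}(y)$ in which a spectral variable $y$ multiplies $p_k$, the coefficients are independent of $y$, and $p_{-1}=0$, $p_0=1$. Your recurrence contains no such variable: $x$, $\alpha$ and $n$ are all fixed, and $(q_k)_k$ is a sequence of numbers. One can manufacture a variable by defining $P_k(y)$ through $x\,P_{k+1}(y)=(y+\alpha+k)P_k(y)-(n+\alpha+k)P_{k-1}(y)$ with $P_{-1}=0$, $P_0=1$; Favard then does give a quasi-definite functional for $(P_k)_k$ when $n+\alpha+k\neq0$. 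But $(q_k)_k$ is \emph{not} the value sequence $(P_k(x))_k$ of that family: the same recurrence holds at $k=0$ with $q_{-1}=L_n^{(\alpha-1)}(x)\neq0$, so $(q_k)_k$ is a nontrivial mixture of the polynomial solution and the second-kind solution. Concretely, already for $n=1$ one has $q_1/q_0=(\alpha+2-x)/(\alpha+1-x)$ while $P_1(x)/P_0(x)=(x+\alpha)/x$, and these differ unless $x=\alpha$. Hence the assertion that $(q_k)_k$ ``is the value sequence'' of the Favard orthogonal polynomial sequence is unjustified, and the orthogonality claimed in the theorem --- which in the paper is the discrete orthogonality over the lattice $\{\alpha+k\}_k$ obtained by putting \eqref{eq:sodeL1} into self-adjoint form (this is the link to the Charlier case) --- does not follow from your argument. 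To close the gap you would need to take $n$ as the spectral parameter and run the Sturm--Liouville/duality argument, or else exhibit the discrete weight in $k$ explicitly.
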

\begin{proof}
By combining \eqref{lowaL}, \eqref{raidoaL} and \eqref{raiupaL} we have
the following second-order difference equations:
\begin{align}
n L_n^{(\alpha)}(x)=&-(\alpha+n)\nabla\!_\alpha\Delta_\alpha L_n^{(\alpha)}(x)
+(\alpha+n-x)\Delta_\alpha L_n^{(\alpha)}(x)\label{eq:sodeL1}\\
=&-x\Delta_\alpha \dlpa{\alpha} L_n^{(\alpha)}(x)+(\alpha+n-x)\dlpa{\alpha} L_n^{(\alpha)}(x),
\label{eq:sodeL2}
\end{align}
which is connected with the Charlier polynomials case  (see \cite[(9.14.5)]{mr2656096}). 
Using the theory of discrete Sturm-Liouville and 
assuming that for any $n\in \mathbb N_0$, in this case, $\alpha$ is variable, and the 
rest of the elements are fixed, the result holds.
\end{proof} 
\begin{rem} Observe that one can construct, under certain condition on $\alpha$ and $x$,
 certain integral representation for such moment functional.
\end{rem} 
From Lemma \ref{lem:3.5} we can deduce some new identities related to the Laguerre polynomials.
\begin{thm} \label{thm:3.4}
For any $\alpha\in \mathbb C$, and any $n,k \in \mathbb N_0$, 
the following Rodrigues-type identities hold:
\begin{align} \label{eq:LalRF}
L_n^{(\alpha+k)}(x)=&\dfrac{(\alpha+k)_n}{x^k}{(\alpha+k)(\alpha+k-1)}
\dlpa{\alpha} 
\cdots {(\alpha+1)\alpha}\dlpa{\alpha} \dfrac{L_{n+k}^{(\alpha)}(x)}{(\alpha)_{n+k}},\\ \label{eq:LnRF}
L_n^{(\alpha+k)}(x)=&\dfrac{(-1)^k (\alpha+1)_{n+k}}{n! x^k}
\nabla^k\hspace{-2mm}_n \dfrac{(n+k)!}{(\alpha+1)_{n+k}}L_{n+k}^{(\alpha)}(x),\\
\label{eq:LalRF3}
L_{n}^{(\alpha)}(x)=&(-1)^k \dfrac{(n+1)_\alpha}{x^\alpha}\Delta_\alpha^k \dfrac{x^\alpha}{(n-k+1)_k}L_{n-k}^{(\alpha)}(x),\\
\label{eq:LnRF2}
L_n^{(\alpha)}(x)=&\Delta_n^k L_{n-k}^{(\alpha+k)}(x),\\
\label{eq:LalRF2}
L_n^{(\alpha+k)}(x)=&\Delta_\alpha^k L_{n+k}^{(\alpha)}(x).
\end{align}
\end{thm}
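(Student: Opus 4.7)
The plan is to establish each of the five Rodrigues-type identities by induction on $k$. The case $k=0$ is the trivial identity $L_n^{(\alpha)}(x)=L_n^{(\alpha)}(x)$, and the inductive step in each case amounts to applying the $k=1$ case once more with parameters suitably shifted; the content of the theorem is therefore concentrated in the five base cases. An alternative route, in the spirit of the proof of Lemma~\ref{lem:3.5}, would be to expand both sides of each identity using the ${}_1F_1$ representation and compare coefficients of $x^j$; I sketch only the structural approach here.

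The two prefactor-free identities \eqref{eq:LnRF2} and \eqref{eq:LalRF2} are essentially restatements of \eqref{lownL}. For \eqref{eq:LnRF2}, the shift $\alpha\to\alpha+1$ in \eqref{lownL} gives $\dln L_n^{(\alpha+1)}(x)=L_n^{(\alpha)}(x)$, equivalently $\Delta_n L_{n-1}^{(\alpha+1)}(x)=L_n^{(\alpha)}(x)$, and iteration with $\alpha\to\alpha+j$ at the $j$-th step produces the claim. For \eqref{eq:LalRF2}, the same shift yields
\[
\Delta_\alpha L_{n+1}^{(\alpha)}(x)=L_{n+1}^{(\alpha+1)}(x)-L_{n+1}^{(\alpha)}(x)=L_n^{(\alpha+1)}(x),
\]
and induction completes the proof.

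For \eqref{eq:LnRF}, the $k=1$ base case is obtained by expanding
\[
-\frac{(\alpha+1)_{n+1}}{n!\,x}\dln\!\left[\frac{(n+1)!}{(\alpha+1)_{n+1}}L_{n+1}^{(\alpha)}(x)\right]=\frac{1}{x}\bigl[(\alpha+n+1)L_n^{(\alpha)}(x)-(n+1)L_{n+1}^{(\alpha)}(x)\bigr],
\]
which equals $L_n^{(\alpha+1)}(x)$ by \eqref{raiupnL} after rewriting $\Delta_n L_n^{(\alpha)}(x)=L_{n+1}^{(\alpha)}(x)-L_n^{(\alpha)}(x)$; the Pochhammer ratio $(\alpha+1)_{n+1}/(\alpha+1)_n=\alpha+n+1$ is exactly what produces the right coefficients. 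The identity \eqref{eq:LalRF} follows by analogous bookkeeping with the roles of $n$ and $\alpha$ exchanged: the lowering building block is \eqref{lowaL}, the three-term ingredient is \eqref{raidoaL}, and the factors $(\alpha+j+1)(\alpha+j)$ that appear between successive applications of $\dlpa{\alpha}$ arise from a discrete Leibniz rule for $\dlpa{\alpha}$ acting on the Pochhammer ratio $1/(\alpha)_{n+k}$. Finally, \eqref{eq:LalRF3} uses the raising counterpart \eqref{raiupaL}, rewritten as $nL_n^{(\alpha)}(x)=(n+\alpha-x)L_{n-1}^{(\alpha)}(x)-x\,\Delta_\alpha L_{n-1}^{(\alpha)}(x)$, together with the Leibniz identity for $\Delta_\alpha$ acting on the product of $x^\alpha$ with $L_{n-k}^{(\alpha)}(x)/(n-k+1)_k$.

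The main obstacle throughout is precisely this bookkeeping with Pochhammer and factorial prefactors; once each $k=1$ case has been verified by the direct algebra above, the induction step is essentially mechanical, because the operators $\dln$, $\Delta_n$, $\dlpa{\alpha}$, and $\Delta_\alpha$ each commute with multiplication by quantities depending only on the complementary variable, so the shift that advances the induction can be absorbed cleanly into the outer prefactor.
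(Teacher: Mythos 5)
Your proposal is correct and follows essentially the same route as the paper: each identity is proved by induction on $k$, with the whole content residing in a one-step contiguous relation drawn from Lemma \ref{lem:3.5} (\eqref{lownL}/\eqref{lowaL} for the last two, and the raising/lowering relations \eqref{raiupnL}--\eqref{raiupaL} for the first three), exactly as the paper does when it rewrites \eqref{raidonL}, \eqref{raidoaL} and \eqref{raiupaL} in normalized one-step form. Your explicit verification of the $k=1$ cases of \eqref{eq:LnRF2}, \eqref{eq:LalRF2} and \eqref{eq:LnRF} is sound, and your sketch of the remaining two is at the same level of detail as the paper's own ``straightforward simplification.''
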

\begin{proof} 
The first identity holds by mathematical induction on $k$
after a straightforward simplification and using \eqref{raidonL} written in 
the following way:
\[
\dfrac{n!}{(\alpha+k+1)_n} L_n^{(\alpha+k)}(x)=
\dfrac{(\alpha+k)(\alpha+k-1)}{x(n+1)}\dlpa{\alpha} \dfrac{(n+1)!}{(\alpha+k)_{n+1}} L_{n+1}^{(\alpha+k-1)}(x).
\]
The second identity holds by mathematical induction on $k$ after a 
straightforward simplification and using \eqref{raidoaL} written in 
the following way:
\[
\dfrac{n!}{(\alpha+k+1)_n} L_n^{(\alpha+k)}(x)=
-\dfrac{\alpha+k}{x}\dln \dfrac{(n+1)!}{(\alpha+k)_{n+1}} L_{n+1}^{(\alpha+k-1)}(x).
\]
The third identity holds by mathematical induction on $k$ after a 
straightforward simplification and using \eqref{raiupaL} written in 
the following way:
\[
x^\alpha L_n^{(\alpha)}(x)=-(n+1)_\alpha\Delta_n \dfrac{x^\alpha}{(n)_{\alpha}} L_{n-1}^{(\alpha)}(x).
\]

The fourth and the fifth relation hold from \eqref{lownL} and \eqref{lowaL}.
\end{proof}
The last result, but not least interesting, concerning the operators associated 
with the Laguerre polynomials is as follows.
\begin{prop} \label{pro:1}
The Laguerre polynomials fulfill the following identity:
\[\begin{split}
(n+1)L_{n+1}^{(\alpha+1)}(x)=&
(2+a\!-\!x)L_n^{(\alpha)}(x)+(3 x\!-\!4\!-\!2 a)(L_n^{(\alpha)}(x))'
+(2+a\!-\!3 x)(L_n^{(\alpha)}(x))''\\
&+x(L_n^{(\alpha)}(x))'''.
\end{split}\]
\end{prop}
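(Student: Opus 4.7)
The plan is to reduce the right-hand side to a linear combination of $L_n^{(\alpha)}(x)$ and $L_n^{(\alpha+1)}(x)$ and then recognize that combination as $(n+1)L_{n+1}^{(\alpha+1)}(x)$ via the three-term recurrence \eqref{ttrr} for $\bigl(L_n^{(\alpha+1)}\bigr)_n$. The workhorse is the classical Laguerre differential equation
\[
x\bigl(L_n^{(\alpha)}\bigr)''(x)=(x-\alpha-1)\bigl(L_n^{(\alpha)}\bigr)'(x)-n\,L_n^{(\alpha)}(x),
\]
together with its $x$-derivative
\[
x\bigl(L_n^{(\alpha)}\bigr)'''(x)=(x-\alpha-2)\bigl(L_n^{(\alpha)}\bigr)''(x)+(1-n)\bigl(L_n^{(\alpha)}\bigr)'(x).
\]

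First I would substitute the second display into the right-hand side of the proposition to eliminate $(L_n^{(\alpha)})'''$, leaving only $L_n^{(\alpha)}$, $(L_n^{(\alpha)})'$ and $x(L_n^{(\alpha)})''$. A second substitution of the first display then eliminates $x(L_n^{(\alpha)})''$, and after collecting coefficients I expect the right-hand side to collapse to
\[
(2n+\alpha+2-x)\,L_n^{(\alpha)}(x)+(x-n-1)\bigl(L_n^{(\alpha)}\bigr)'(x).
\]
Next I invoke the classical derivative rule $\bigl(L_n^{(\alpha)}\bigr)'(x)=-L_{n-1}^{(\alpha+1)}(x)$ together with identity \eqref{lownL} shifted by $\alpha\mapsto\alpha+1$, that is $L_{n-1}^{(\alpha+1)}(x)=L_n^{(\alpha+1)}(x)-L_n^{(\alpha)}(x)$, to rewrite the previous display as
\[
(n+\alpha+1)\,L_n^{(\alpha)}(x)+(n+1-x)\,L_n^{(\alpha+1)}(x).
\]

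To close the argument, I would apply \eqref{ttrr} to the sequence $\bigl(L_n^{(\alpha+1)}\bigr)_n$ and once more use $L_{n-1}^{(\alpha+1)}=L_n^{(\alpha+1)}-L_n^{(\alpha)}$ to eliminate the $L_{n-1}^{(\alpha+1)}$ term; the outcome expresses $(n+1)L_{n+1}^{(\alpha+1)}(x)$ in exactly the form $(n+\alpha+1)L_n^{(\alpha)}(x)+(n+1-x)L_n^{(\alpha+1)}(x)$, matching the reduced right-hand side and completing the identification. I expect the only genuine difficulty to be the coefficient bookkeeping through the two differential eliminations; conceptually, every ingredient is either the Laguerre ODE, the classical derivative formula, identity \eqref{lownL}, or the three-term recurrence \eqref{ttrr}, all of which are either standard or already established in the excerpt.
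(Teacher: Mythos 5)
Your proof is correct, and it takes a genuinely different route from the paper's. The paper proceeds \emph{synthetically}: it applies the contiguous relation $L_n^{(\alpha)}-(L_n^{(\alpha)})'=L_n^{(\alpha+1)}$ twice to express $L_n^{(\alpha+2)}$ as $L_n^{(\alpha)}-2(L_n^{(\alpha)})'+(L_n^{(\alpha)})''$, and then feeds this into the single identity $x(L_n^{(\alpha)})'-(x-\alpha)L_n^{(\alpha)}=(n+1)L_{n+1}^{(\alpha-1)}$ with $\alpha\mapsto\alpha+2$, so that the stated third-order differential expression drops out directly. You instead work \emph{reductively}: using the Laguerre ODE $x(L_n^{(\alpha)})''=(x-\alpha-1)(L_n^{(\alpha)})'-nL_n^{(\alpha)}$ and its derivative to strip the right-hand side down to $(2n+\alpha+2-x)L_n^{(\alpha)}+(x-n-1)(L_n^{(\alpha)})'$, then $(L_n^{(\alpha)})'=-L_{n-1}^{(\alpha+1)}$ and \eqref{lownL} to reach $(n+\alpha+1)L_n^{(\alpha)}+(n+1-x)L_n^{(\alpha+1)}$, which the three-term recurrence for $(L_n^{(\alpha+1)})_n$ identifies with $(n+1)L_{n+1}^{(\alpha+1)}$; I have checked that every intermediate coefficient you state is right. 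The paper's argument is shorter and explains where the identity comes from (it is literally the composition of three classical ladder operations), while yours requires more bookkeeping but uses only the most standard facts (ODE, derivative rule, contiguous relation, recurrence) and has the merit of verifying the identity by reducing both sides to a common canonical form rather than reproducing the construction. One cosmetic point: the proposition writes $a$ for the parameter appearing as $\alpha$ in the polynomials; you silently identify the two, which is evidently the intended reading.
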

\begin{proof} 
The identity follows using the identities:
\begin{align}
L_n^{(\alpha)}(x)-(L_n^{(\alpha)}(x))'=&L_{n}^{(\alpha+1)}(x),\\
x (L_n^{(\alpha)}(x))'-(x-\alpha)L_n^{(\alpha)}(x)=&(n+1)L_{n+1}^{(\alpha-1)}(x),
\end{align}
applying the first twice, and later the second one once mapping $\alpha\mapsto \alpha+2$.
\end{proof}
\subsection{The Charlier polynomials}
The Charlier polynomials can be written in terms of 
hypergeometric series as \cite[\S 9.14]{mr2656096}
\[
C_n(x;a)=\Fhyp20{-n,-x}{\--}{-\frac 1a}.
\]
In this case $\phi(x)=x$ and $\phi^*(x)=a$, i.e., ${\sf c}=0$, and $C_n({\sf c};a)=1$.
Taking this into account we can state the following result.
\begin{lem} \label{lem:3.6}
For any $a\in \mathbb C$ and any $n\in \mathbb N_0$, the following identities hold:
\begin{align}
a\Delta_nC_n(x;a)=&-{x}\,C_{n}(x-1;a),\label{rainC}\\
n C_n(x;a)+a (C_n(x;a))'=&(n+1)C_{n-1}(x;a),\label{lowDC}\\
(a-n) C_n(x;a)+n\dln C_{n}(x;a)=&a\, C_{n}(x+1;a),\label{lowdonC}\\
(a-n-1) C_n(x;a)+a \Delta_n C_{n}(x;a)=&a\, C_{n+1}(x+1;a).\label{lowupnC}
\end{align}
\end{lem}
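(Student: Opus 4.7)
The plan is to follow exactly the strategy used in the proof of Lemma~\ref{lem:3.5}: verify each of the four identities by substituting the explicit series expansion
$$C_n(x;a)=\sum_{k=0}^n \frac{(-n)_k(-x)_k}{k!}\left(-\frac{1}{a}\right)^k$$
into both sides and matching coefficients. In each case both sides are polynomials in $x$ of the same, easily-computed degree, so termwise matching in the common basis $\{(-x)_k\}_k$ is equivalent to the claimed equality.

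The algebraic engine of the argument is a short list of Pochhammer shifts, namely $(-n-1)_k-(-n)_k=-k\,(-n)_{k-1}$, $-x\,(-x+1)_k=(-x)_{k+1}$ (equivalently $(-(x-1))_k=(-x+1)_k$), and $(-(x+1))_k=(-x-1)(-x)_{k-1}$ for $k\ge 1$. These shifts let me re-express $C_{n+1}(x;a)$ and $C_n(x\pm 1;a)$ as reindexed sums in the basis $(-n)_k(-x)_k$, after which each identity collapses term by term. For \eqref{rainC} I expand $a\Delta_n C_n(x;a)=a[C_{n+1}(x;a)-C_n(x;a)]$, apply the first shift to combine the two sums, reindex $k\mapsto k-1$, and then use the second shift to pull out the overall factor of $-x$ and recognise what remains as $C_n(x-1;a)$. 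Identities \eqref{lowdonC} and \eqref{lowupnC} are handled in the same spirit: I expand $C_n(x+1;a)$ via the third shift and combine with the $\dln$ or $\Delta_n$ shifts in $n$ to bring both sides into the same form.

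Identity \eqref{lowDC} is the only one involving the $x$-derivative. The cleanest route is to differentiate the series termwise and use the algebraic structure of $\tfrac{d}{dx}(-x)_k$ to collect like powers of $x$; alternatively, \eqref{lowDC} can be obtained by combining \eqref{rainC} with the three-term recurrence relation for the Charlier family, thereby avoiding any differentiation altogether. The main obstacle I anticipate is bookkeeping: after each Pochhammer shift the summation range changes (e.g.\ from $k=0,\dots,n$ to $k=1,\dots,n+1$), and the boundary contributions at $k=0$ and at the top end must be tracked carefully so that the two sides really do agree. Once these shifts are set up, the remainder of each verification is elementary and essentially mechanical, just as in Lemma~\ref{lem:3.5}.
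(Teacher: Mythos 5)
Your plan for \eqref{rainC}, \eqref{lowdonC} and \eqref{lowupnC} --- expanding both sides in the basis $\{(-x)_k\}_k$ and matching coefficients via Pochhammer shifts --- is exactly the paper's argument (``identifying the polynomial coefficients on the left and right-hand sides'') made explicit, and it goes through for those three identities.

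The gap is in your treatment of \eqref{lowDC}. You read the prime as $d/dx$, but under that reading the identity is false: for $n=1$ one has $C_1(x;a)=1-x/a$, so $nC_1(x;a)+a\,\frac{d}{dx}C_1(x;a)=1-x/a-1=-x/a$, which is not $(n+1)C_0(x;a)=2$. Termwise $x$-differentiation of the series therefore cannot succeed. Your fallback --- deriving \eqref{lowDC} from \eqref{rainC} and the three-term recurrence --- cannot succeed either, since those are pure difference relations in $n$ and $x$, and no derivative operator can be manufactured by combining them. The paper's proof instead invokes the connection $\frac{(-a)^n}{n!}C_n(x;a)=L_n^{(x-n)}(a)$ together with the Laguerre derivative formula $\frac{d}{da}L_n^{(\alpha)}(a)=-L_{n-1}^{(\alpha+1)}(a)$; this identifies the prime in \eqref{lowDC} as $\partial/\partial a$ and yields $nC_n(x;a)+a\,\partial_a C_n(x;a)=n\,C_{n-1}(x;a)$ (the factor $n+1$ in the displayed statement appears to be a misprint for $n$: at $n=1$ one gets $(1-x/a)+a\cdot x/a^2=1=C_0(x;a)$). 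To repair your argument you should adopt this reading and either follow the paper's route through the Laguerre connection, or differentiate the series in $a$ directly: since $a\,\partial_a(-1/a)^k=-k(-1/a)^k$, the left-hand side becomes $\sum_k (n-k)\frac{(-n)_k(-x)_k}{k!}(-1/a)^k$, and the elementary identity $(n-k)(-n)_k=n(-n+1)_k$ finishes the coefficient match.
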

\begin{proof} 
\eqref{lowDC} is a direct consequence of the identity presented in 
Remark in \cite[p. 249]{mr2656096}
\[
\dfrac{(-a)^n}{n!}C_n(x;a)=L_n^{(x-n)}(a).
\]
The other identities can be checked by identifying the polynomial coefficients 
on the left and right-hand sides of each identity. Hence, the results follow
\end{proof}
From Lemma \ref{lem:3.6} we can deduce some new identities related to the Charlier polynomials.
\begin{thm} \label{thm:3.5}
For any $a\in \mathbb C$, and any $n,k \in \mathbb N_0$, 
the following Rodrigues-type identities hold:
\begin{align} 
C_n(x;a)=&\dfrac{(-a)^k}{(x+1)_k}\Delta^k\hspace{-2mm}_n C_n(x+k;a), \label{eq:CnRF}\\
C_n(x;a)=&\dfrac{n!}{a^n}\Delta^k\hspace{-2mm}_n \dfrac{a^{n-k}}{(n-k)!}C_{n-k}(x-k;a). \label{eq:CnRF2}
\end{align}
\end{thm}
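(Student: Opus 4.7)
The plan is to establish both Rodrigues-type identities by induction on $k$, leveraging the raising/lowering-style identities from Lemma~\ref{lem:3.6}. Each identity is trivial for $k=0$, so the real work is in the inductive step, which in each case amounts to a single application of one of the relations in Lemma~\ref{lem:3.6} followed by a clean algebraic rearrangement.

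For \eqref{eq:CnRF}, I would first rewrite \eqref{rainC} with $x$ replaced by $x+k+1$, obtaining
\[
\Delta_n C_n(x+k+1;a)=-\frac{x+k+1}{a}\,C_n(x+k;a).
\]
The factor $-(x+k+1)/a$ does not depend on $n$, so it commutes with $\Delta_n^{\,k}$. Applying $\Delta_n^{\,k}$ to both sides and invoking the inductive hypothesis $\Delta_n^{\,k} C_n(x+k;a)=\frac{(x+1)_k}{(-a)^k}C_n(x;a)$ gives
\[
\Delta_n^{\,k+1} C_n(x+k+1;a)=\frac{(x+1)_k\,(x+k+1)}{(-a)^{k+1}}C_n(x;a)=\frac{(x+1)_{k+1}}{(-a)^{k+1}}C_n(x;a),
\]
which is the $(k+1)$-th case after clearing the prefactor.

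For \eqref{eq:CnRF2}, I would similarly proceed by induction on $k$. The crux of the inductive step is the one-step identity
\[
\Delta_n\!\left(\frac{a^{n-k-1}}{(n-k-1)!}\,C_{n-k-1}(x-k-1;a)\right)=\frac{a^{n-k}}{(n-k)!}\,C_{n-k}(x-k;a),
\]
which I would verify by expanding the left-hand side via the definition of $\Delta_n$ and then applying \eqref{lowdonC}, rewritten as $C_m(y;a)=C_m(y-1;a)-(m/a)\,C_{m-1}(y-1;a)$, with the choice $m=n-k$, $y=x-k$. Once this one-step statement is in hand, applying $\Delta_n^{\,k}$ and substituting the inductive hypothesis immediately yields the $(k+1)$-th case.

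The main obstacle is really just careful bookkeeping of the three indices $n$, $x$, $k$ and ensuring that every factor we move past $\Delta_n$ is genuinely independent of $n$; no new structural ideas beyond Lemma~\ref{lem:3.6} are needed. The argument runs in close parallel to the proof of Theorem~\ref{thm:3.4} for the Laguerre family, which proceeded by the same induction-on-$k$ template using \eqref{raidonL}, \eqref{raidoaL}, and \eqref{raiupaL}.
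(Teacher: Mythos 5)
Your proposal is correct and follows essentially the same route as the paper: iterate \eqref{rainC} for \eqref{eq:CnRF} and iterate a one-step shift identity for \eqref{eq:CnRF2}, with induction on $k$ in each case. The only (harmless) difference is that you derive the auxiliary one-step identity $C_m(y-1;a)-(m/a)\,C_{m-1}(y-1;a)=C_m(y;a)$ from \eqref{lowdonC}, whereas the paper states the equivalent identity $(n+1)!\,\Delta_n \frac{a^n}{n!}C_n(x;a)=a^{n+1}C_{n+1}(x+1;a)$ directly and checks it by computer algebra.
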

\begin{proof} 
The first identity is a direct consequence of \eqref{rainC}; and the second is due the identity:
\[
(n+1)!\Delta_n \dfrac{a^n}{n!} C_n(x;a)=a^{n+1} C_{n+1}(x+1;a).
\] 
\end{proof}
\subsection{The Meixner polynomials}
The Meixner polynomials can be written in terms of 
hypergeometric series as \cite[\S 9.10]{mr2656096}
\[
M_n(x;\beta,c)=\Fhyp{2}{1}{-n,-x}{\beta}{1-\frac 1c}.
\]
In this case $\phi(x)=x$ and $\phi^*(x)=c(x+\beta)$, therefore we must consider 
two cases, ${\sf c}_1=0$, and ${\sf c}_2=-\beta$, for which we have
$M_n({\sf c}_1;\beta,c)=1$, and $M_n({\sf c}_2;\beta,c)=1/{c^n}$.
Taking into account Lemma \ref{lem:3.1} we can state the next result.
\begin{lem} \label{lem:3.7}
The polynomial sequence $(M_n(x;\beta,c)/M_n({\sf c}_2;\beta,c))_n$ fulfills 
the recurrence relation 
\[
(x+\beta)p_n(x)=\alpha^{\tt M}_n p_{n+1}(x)
-(\alpha^{\tt M}_n+\gamma^{\tt M}_n) p_{n}(x)
+\gamma^{\tt M}_np_{n-1}(x),
\]
with initial condition $p_0(x)=1$, 
\[
\alpha^{\tt M}_n=\dfrac{n+\beta}{c-1},\qquad 
\gamma^{\tt M}_n=\dfrac{n c}{c-1},
\]
and fulfills the second order difference equation
\[
(x+\beta)p_n(x)=d_n^2 \Delta_n \dfrac{\gamma^{\tt M}_n}{d_n^2} \dln p_n(x)=
d_n^2 \nabla\!_n\dfrac{\alpha^{\tt M}_n}{d_n^2}\Delta_n p_n(x)=
\gamma^{\tt M}_n \nabla\!_n\Delta_n p_n(x)+(\alpha^{\tt M}_n-\gamma^{\tt M}_n)\Delta_n p_n(x).
\]
\end{lem}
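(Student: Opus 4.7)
The plan is to obtain the lemma by specializing Lemmas~\ref{lem:3.1}, \ref{lem:3.3}, and \ref{lem:3.4} to the Meixner family. Since $\phi^*(x)=c(x+\beta)$ has degree one with root ${\sf c}_2=-\beta$, this family lies in LC, and Lemma~\ref{lem:3.1} furnishes a normalization $\lambda_n=1/M_n(-\beta;\beta,c)$ such that the rescaled polynomials satisfy a three-term recurrence whose coefficients sum to $-\beta$. Evaluating the ${}_2F_1$ representation at $x=-\beta$ collapses by the reduction ${}_2F_1(-n,\beta;\beta;z)=(1-z)^n$ to give $M_n(-\beta;\beta,c)=c^{-n}$, so $\lambda_n=c^n$, as stated in the excerpt preceding the lemma.

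First I would take the standard Meixner three-term recurrence from \cite[(9.10.3)]{mr2656096}, substitute $p_n(x)=c^n M_n(x;\beta,c)$, and read off $\alpha^{\tt M}_n=(n+\beta)/(c-1)$, $\gamma^{\tt M}_n=nc/(c-1)$, together with the middle coefficient. A direct check verifies $\alpha^{\tt M}_n+\beta_n+\gamma^{\tt M}_n=-\beta$, in agreement with Lemma~\ref{lem:3.1}; transferring $-\beta\,p_n$ to the left-hand side then yields exactly the displayed recurrence, the initial condition $p_0=1$ is inherited from $M_0=1$, and $\gamma^{\tt M}_0=0$ is visible from the formula.

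Next I would apply Lemma~\ref{lem:3.3} in the $\deg\phi^*=1$ form (replacing $\phi^*$ by the monic factor $x+\beta$, per the remark following that lemma) to rewrite the recurrence as the structure-type relation $(x+\beta)p_n=\alpha^{\tt M}_n\Delta_n p_n-\gamma^{\tt M}_n\Delta_n p_{n-1}$. Lemma~\ref{lem:3.4}, combined with \eqref{gamrel} and the observation that $\Delta_n p_{n-1}=p_n-p_{n-1}=\nabla\!_n p_n$, then produces both discrete Sturm--Liouville forms appearing in the statement. For the final expression I would expand $d_n^2\nabla\!_n[(\alpha^{\tt M}_n/d_n^2)\Delta_n p_n]$ into $\alpha^{\tt M}_n\Delta_n p_n-\gamma^{\tt M}_n\nabla\!_n p_n$ (again using \eqref{gamrel}), and then apply the telescoping identity $\Delta_n f_n-\nabla\!_n f_n=\nabla\!_n\Delta_n f_n$ to arrive at $\gamma^{\tt M}_n\nabla\!_n\Delta_n p_n+(\alpha^{\tt M}_n-\gamma^{\tt M}_n)\Delta_n p_n$.

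No step is genuinely hard; the only delicate points are keeping the conventions straight, namely that the monic $x+\beta$ appearing on every left-hand side differs from $\phi^*(x)=c(x+\beta)$ by the factor $c$ absorbed into the $1/(c-1)$ denominators of $\alpha^{\tt M}_n,\gamma^{\tt M}_n$, and that the index shifts in $\Delta_n p_{n-1}$ versus $\nabla\!_n p_n$ and in \eqref{gamrel} must be tracked carefully when assembling the Sturm--Liouville forms.
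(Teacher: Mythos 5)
Your proof is correct and takes essentially the route the paper intends: Lemma~\ref{lem:3.7} is stated there without an explicit proof, as an immediate specialization of Lemmas~\ref{lem:3.1}, \ref{lem:3.3} and \ref{lem:3.4} to the Meixner data (normalization $\lambda_n=c^n$ coming from $M_n(-\beta;\beta,c)=c^{-n}$, and the coefficients of Table~\ref{table:1}), which is precisely what you carry out. Your write-up is merely more explicit than the paper's, e.g.\ in deriving $M_n(-\beta;\beta,c)=c^{-n}$ from the ${}_2F_1$ reduction and in the telescoping step $\Delta_n f_n-\dln f_n=\dln\Delta_n f_n$ that yields the final form.
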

As in the Laguerre polynomial case, the next result follows.
\begin{lem} \label{lem:3.8}
For any $\beta, c\in \mathbb C$, $\beta \not\in\{ 0,2\}$, 
$c\not\in\{0, 1\}$, and any $n\in \mathbb N_0$, 
the following identities hold:
\begin{align}
\hspace{-7mm}\dfrac{\beta c}{c-1}\,\Delta_nM_n(x;\beta,c)=&{x} \,M_{n}(x-1;\beta+1,c),\label{rainM} \\
\hspace{-7mm}M_n(x;\beta,c)+\frac c{c-1}\Delta_nM_n(x;\beta,c)=&\frac{x+\beta}{\beta}\,M_{n}(x;\beta+1,c),\label{raiupnM}\\
\hspace{-7mm}M_n(x;\beta,c)+\frac 1{c-1}\dln M_n(x;\beta,c)=&\frac{x+\beta}{\beta}\,M_{n-1}(x;\beta+1,c),\label{raidonM}\\
\hspace{-7mm}\dfrac{c\beta(1-\beta)}{c-1}\,\nabla\!_\beta M_n(x;\beta,c)=&\, x n M_{n-1}(x-1;\beta+1,c),\label{lowbM}\\
\nonumber\hspace{-7mm}\dfrac{\beta(\beta-1) c}{(\beta+n)(c-1)} M_{n+1}(x+1;\beta-1,c)=&\left(x +\beta+ \dfrac{\beta(\beta-1)}{(\beta+n)(c-1)}\right) M_n(x;\beta,c)\\
& +{(x +\beta)}\Delta_\beta M_n(x;\beta,c), \label{raiupbM}\\
\nonumber\hspace{-7mm}\dfrac{(\beta-1)(\beta-2) c}{(\beta-1+n)(c-1)} M_{n+1}(x+1;\beta-2,c)=&\left(x +\beta-1+ \dfrac{(\beta-1)(\beta-2)}{(\beta-1+n)(c-1)}\right) M_n(x;\beta,c) \\&-{\dfrac{(\beta-1)(\beta-2)}{(\beta-1+n)(c-1)}}\nabla_\beta M_n(x;\beta,c),\label{raidobM}\\
\dfrac {\partial}{\partial c}c^n (\beta)_n M_n(x;\beta,c)=& 
\dfrac{n (x+\beta)}{c+n+\beta} c^ n (\beta+1)_n M_{n-1}(x;\beta+1,c),\label{lowcM}\\
\nonumber c (1-\beta) c^n (\beta)_n \beta M_{n+1}(x;\beta-1,c) =&c (1-c) \dfrac {\partial}{\partial c} 
c^n (\beta)_n M_n(x;\beta,c) \\&- \left((c-1) x+n-(n+1) c+c \beta\right) 
c^ n (\beta)_n \beta M_n(x;\beta,c).\label{raidcM}
\end{align}
\end{lem}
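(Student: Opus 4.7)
The plan is to follow the same strategy used in Lemma \ref{lem:3.5} and Lemma \ref{lem:3.6}, namely, to verify each identity by expanding both sides in terms of the hypergeometric series
\[
M_n(x;\beta,c)=\sum_{k=0}^n \frac{(-n)_k(-x)_k}{(\beta)_k\,k!}\left(1-\frac 1c\right)^k,
\]
and matching coefficients of $(1-1/c)^k$. The built-in symmetry $M_n(x;\beta,c)=M_x(n;\beta,c)$ coming from the $_2F_1$ representation will be exploited to convert certain $x$-shifts into $n$-shifts and vice versa, shortening several of the verifications.

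For the three identities \eqref{rainM}--\eqref{raidonM}, which only involve $\Delta_n$ or $\nabla\!_n$ together with the single shift $\beta\mapsto \beta+1$, I would substitute the series directly. On the left-hand side one uses $(-n-1)_k=(-n)_k-k(-n)_{k-1}$ together with the elementary identity $\beta\,(\beta+1)_{k-1}=(\beta)_k$; on the right-hand side the factor $(x+\beta)/\beta$ combines with $(-x)_k/(\beta+1)_k$ in the expected way. The coefficients then agree term by term.

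For \eqref{lowbM}--\eqref{raidobM}, which involve the $\beta$-difference operators $\Delta_\beta$ and $\nabla\!_\beta$, the most efficient route is through Gauss's contiguous relations for $_2F_1(a,b;c;z)$ specialised at $a=-n$, $b=-x$, $z=1-1/c$: the three-term relations linking $_2F_1$ at $c$, $c\pm 1$, and the corresponding derivative give precisely the structure on the right-hand side after clearing the factors $(\beta+n)(c-1)$ appearing in the denominators. For the $\partial/\partial c$ identity \eqref{lowcM} I would differentiate the series termwise using
\[
\frac{\partial}{\partial c}\!\left(1-\tfrac 1c\right)^{\!k}=\frac{k}{c^{2}}\!\left(1-\tfrac 1c\right)^{\!k-1},
\]
and recognise the resulting series as the Meixner polynomial with $n\mapsto n-1$, $\beta\mapsto \beta+1$. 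Finally, identity \eqref{raidcM} will be obtained by combining \eqref{lowcM} with \eqref{raidobM} to eliminate the $M_{n-1}(x;\beta+1,c)$ term in favour of $M_{n+1}(x;\beta-1,c)$.

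The main obstacle I anticipate is not conceptual but bookkeeping: the denominators $(c-1)$, $(\beta+n)(c-1)$, and $(\beta-1+n)(c-1)$ that appear in \eqref{raiupbM}--\eqref{raidcM} make the term-by-term matching quite error-prone. A clean way to control this is to normalise the polynomials by setting $\widetilde M_n:=c^n(\beta)_n M_n(x;\beta,c)$ (which is precisely the combination appearing in \eqref{lowcM}--\eqref{raidcM}), derive the contiguous relations for $\widetilde M_n$ first, and only at the end translate the statements back to the standard normalisation. Once that normalisation is fixed, every identity in the lemma reduces to a routine coefficient comparison.
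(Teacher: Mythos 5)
Your proposal is essentially the paper's own proof: the paper verifies each identity simply by identifying the polynomial coefficients on the left- and right-hand sides (with the help of Mathematica), which is exactly your series-expansion and coefficient-matching plan, with the Gauss contiguous relations and the normalisation $c^n(\beta)_n M_n$ serving only as organisational aids. One caveat: your proposed shortcut of deriving \eqref{raidcM} from \eqref{lowcM} and \eqref{raidobM} runs into an argument mismatch ($M_{n+1}(x+1;\beta-2,c)$ in \eqref{raidobM} versus $M_{n+1}(x;\beta-1,c)$ in \eqref{raidcM}), but your stated default of direct coefficient comparison handles that identity in any case.
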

\begin{proof} These identities can be checked by identifying the 
polynomial coefficients on the left and right-hand sides of each identity and with the help of 
 Wolfram Mathematica 13${}^\circledR$.
\end{proof}
A direct consequence is the fact that this polynomial sequence is orthogonal with respect to the 
parameter $\beta$.
\begin{thm} \label{thm:3.6} For any $\beta, c\in \mathbb C$, with $-\beta \not\in \mathbb N_0$,  
any $x\in \mathbb C$,  $x\not\in\{0,-\beta\}$, and any $n\in \mathbb N_0$, the polynomial sequence 
$\big(M_n(x;\beta+k,c)\big)_k$ 
is orthogonal with respect to certain moment functional.
\end{thm}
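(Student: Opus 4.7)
The proof parallels that of Theorem \ref{thm:3.3}: the plan is to treat $\beta$ as the discrete variable (with $x$, $c$, and $n$ fixed) and derive a Sturm–Liouville-type second-order difference equation in $\beta$ for $M_n(x;\beta,c)$. Once this equation is in place, the discrete Sturm–Liouville theory — applied in parallel with Theorem \ref{thm:3.1} through a closure relation and an Abel summation argument — produces the orthogonality of the eigenfunctions $\bigl(M_n(x;\beta+k,c)\bigr)_k$ with respect to a moment functional in the parameter $\beta$, with $n$ serving as the spectral index.

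To build the difference equation, I would combine the three shift identities \eqref{lowbM}, \eqref{raiupbM}, and \eqref{raidobM} of Lemma \ref{lem:3.8}, which play here the roles that \eqref{lowaL}, \eqref{raiupaL}, \eqref{raidoaL} play in the Laguerre argument. Both \eqref{raiupbM} and \eqref{raidobM} express $M_{n+1}(x+1;\beta-j,c)$ (for $j=1,2$) as linear combinations of $M_n(x;\beta,c)$ and either $\Delta_\beta M_n(x;\beta,c)$ or $\nabla_\beta M_n(x;\beta,c)$. Replacing $\beta\mapsto\beta-1$ in \eqref{raiupbM} and comparing with \eqref{raidobM} allows one to eliminate the common $M_{n+1}(x+1;\cdot,c)$ term, yielding a relation purely among shifts of $M_n(x;\beta,c)$ in $\beta$. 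A further application of \eqref{lowbM} reintroduces a factor of $n$, producing the desired self-adjoint form
\[
n\,M_n(x;\beta,c)=\nabla_\beta\!\bigl(A(\beta)\,\Delta_\beta M_n(x;\beta,c)\bigr)+B(\beta)\,\Delta_\beta M_n(x;\beta,c),
\]
for explicit coefficients $A$, $B$ depending on $x$, $c$, $\beta$. The hypotheses $-\beta\notin\mathbb N_0$ and $x\notin\{0,-\beta\}$ are precisely what prevents the factors $(\beta)_n$, $x$, and $x+\beta$ appearing in \eqref{raiupbM}–\eqref{raidobM} from vanishing, so the resulting difference operator is non-degenerate and the Sturm–Liouville setup is well-posed.

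The main obstacle will be the bookkeeping: in contrast with the Laguerre identities \eqref{raidoaL}–\eqref{raiupaL}, which keep $x$ fixed, the Meixner shift identities move both $x$ and $\beta$ simultaneously, so care is required to cancel all $x$-shifts in the final combination so that only evaluations $M_n(x;\beta+k,c)$ at the original $x$ survive. Once this reduction is achieved, the Sturm–Liouville conclusion and the construction of the moment functional — including verification of the boundary behaviour needed for Abel summation — proceed exactly as in Theorems \ref{thm:3.1} and \ref{thm:3.3}.
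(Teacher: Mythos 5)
Your proposal follows essentially the same route as the paper: it combines the same three identities \eqref{lowbM}, \eqref{raiupbM}, \eqref{raidobM} into a second-order Sturm--Liouville-type difference equation in $\beta$ (the paper's \eqref{eq:sodeM}, with $n$ as spectral index) and then invokes discrete Sturm--Liouville theory exactly as in the Laguerre case. The only differences are cosmetic --- your explicit description of the elimination of the $M_{n+1}(x+1;\cdot,c)$ term, and a normalization of the left-hand side ($nM_n$ versus the paper's $nxM_n$) --- so the argument matches the paper's proof.
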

\begin{proof}
By combining \eqref{lowbM}, \eqref{raiupbM} and \eqref{raidobM} we have
the following second order difference equations:
\begin{equation} \label{eq:sodeM}
\hspace*{-1.5mm}n x M_{n}(x;\beta,c)\!=\!\dfrac{\beta(\beta\!-\!1)}{c-1}\dlpa{\beta}\Delta_\beta M_{n}(x;\beta,c)
-\left(\!\dfrac{\beta(\beta\!-\!1)}{c-1}\!+\!(\beta+x)(\beta+n)\!\right)\!\Delta_\beta M_{n}(x;\beta,c),\!\!
\end{equation}
which is connected with the Continuous Hahn polynomials case 
(see \cite[(9.4.5)]{mr2656096}). Using the theory of discrete  Sturm-Liouville and 
assuming that for any $n\in \mathbb N_0$, in this case, $\beta$ is variable, and the rest of the 
elements are fixed, the result holds.
\end{proof} 
\begin{rem} \begin{itemize} 
\item Note that in \cite{MR2494714} the authors extended the 
orthogonality relations for the Meixner polynomials through 
the orthogonality relations of the 
Continuous Hahn Polynomials.
\item Observe that one can construct, under certain condition on the parameters, $n$ and $x$,
 certain integral representation for such moment functional.
 \end{itemize}
\end{rem}
From Lemma \ref{lem:3.8} we can deduce some new identities related to the Meixner polynomials.
\begin{thm} \label{thm:3.6}
For any $\beta, c\in \mathbb C$, and any $n,k \in \mathbb N_0$, 
the following Rodrigues-type identities hold:
\begin{align} \label{eq:MnRF}
M_{n}(x;\beta+k,c)=&\dfrac{(\beta)_k c^k}{(c-1)^k (x+1)_k}
\nabla^k\hspace{-2mm}_n M_{n+k}(x;\beta,c),\\ 
M_{n}(x;\beta+k,c)=&\dfrac{(\beta)_k}{(1-c)^k (x+\beta)_k(-c)^n}
\Delta^k\hspace{-2mm}_n (-c)^{n+k}M_{n+k}(x;\beta,c), \label{eq:MnRF2}\\
\label{eq:MbRF}
M_n(x;\beta+k,c)=&\dfrac{c^k (\beta+k-1)(2-k-\beta)}{(c-1)^k (x+1)_k (n+1)_k}
\dlpa{\beta} \cdots \beta(1-\beta)\dlpa{\beta} M_{n+k}(x+k;\beta,c),\\
\label{eq:LalRF2}
c^n (\beta+k)_n M_n(x;\beta+k,c)=&\dfrac{(c+n+\beta+k)}{(n+1)_k (x+\beta)_k}
\dfrac{\partial}{\partial c}\cdots (c+n+\beta+k)\dfrac{\partial}{\partial c}
c^{n+k} (\beta)_{n+k} M_{n+k}(x;\beta,c).
\end{align}
\end{thm}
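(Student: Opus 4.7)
The plan is to prove each of the four Rodrigues-type identities by induction on $k$, following the pattern of the proof of Theorem \ref{thm:3.4} for the Laguerre case. In each identity the base case $k=0$ is trivial, since both sides reduce to $M_n(x;\beta,c)$, and the induction step is supplied by one of the first-order identities of Lemma \ref{lem:3.8} rewritten in a normalized form that exhibits a clear telescoping of Pochhammer factors.

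For \eqref{eq:MnRF} I would start from \eqref{rainM} after the replacement $\beta\mapsto\beta+k$, using the elementary observation $\Delta_n f(n,\cdot)=\nabla_n f(n+1,\cdot)$ to advance the index $n$. Multiplying both sides by an appropriate ratio of Pochhammer symbols recasts the identity as a one-step relation whose right-hand side matches the $k$-th inductive hypothesis after a single further application of $\nabla_n$; the induction step then follows by pulling the $n$-independent prefactor $(\beta)_k c^k/((c-1)^k(x+1)_k)$ across $\nabla_n^k$, which is legitimate because $\nabla_n$ acts only on the discrete variable~$n$. The proof of \eqref{eq:MnRF2} is parallel, but the atomic step is taken from the raising identity \eqref{raiupnM}; the alternating factor $(-c)^n$ is produced automatically once the atomic step is written with $(-c)^{n+1}M_{n+1}(x;\beta,c)$ as the argument of $\Delta_n$, and the Pochhammer factor $(x+\beta)_k$ telescopes naturally as $\beta$ is successively incremented.

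For \eqref{eq:MbRF} the atomic step comes from \eqref{raidobM} (or, equivalently, from \eqref{raiupbM}) after the shift $\beta\mapsto\beta+k$; iterating $k$ times produces the characteristic interleaved product $\dlpa{\beta}(\beta+k-1)(2-k-\beta)\dlpa{\beta}\cdots\beta(1-\beta)\dlpa{\beta}$, precisely because the quadratic $\beta$-factors cannot be pulled past $\dlpa{\beta}$, while $(x+1)_k$ and $(n+1)_k$ arise from the successive shifts $x\mapsto x+1$ and $n\mapsto n+1$. Identity \eqref{eq:LalRF2} is proved analogously, iterating \eqref{lowcM} with $c^n(\beta)_n M_n(x;\beta,c)$ packaged as a single object so that the non-commutation of $\partial/\partial c$ with the scalar $c+n+\beta+k$ forces that factor to appear between successive derivatives. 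The main obstacle in each case is bookkeeping rather than anything conceptually deep: one must carefully track how the Pochhammer and power prefactors interact with the successive shifts in $n$, $x$ and $\beta$, and, in the last two identities, justify the interleaving of non-commuting operators with parameter-dependent scalar factors. Once the atomic identities of Lemma~\ref{lem:3.8} are normalized correctly, the induction itself is mechanical and essentially writes itself.
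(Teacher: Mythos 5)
Your overall strategy---induction on $k$ with the base case $k=0$ trivial and the inductive step supplied by a suitably normalized first-order identity from Lemma~\ref{lem:3.8}---is exactly the paper's, and your choices of atomic identity for three of the four formulas are correct or equivalent to the paper's: \eqref{rainM} for \eqref{eq:MnRF}, \eqref{raiupnM} for \eqref{eq:MnRF2} (this is just \eqref{raidonM} with $n$ shifted by one, so it is the same step the paper uses), and \eqref{lowcM} for \eqref{eq:LalRF2}.

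There is, however, a concrete error in your treatment of \eqref{eq:MbRF}: the atomic step cannot be taken from \eqref{raidobM} or \eqref{raiupbM}. Those identities have the affine form
\[
\mathrm{const}\cdot M_{n+1}(x+1;\beta-j,c)=A(x,\beta,n)\,M_n(x;\beta,c)+B(x,\beta,n)\,\Delta_\beta M_n(x;\beta,c),
\]
so iterating them produces an expanding sum of mixed terms rather than the pure interleaved product $\dlpa{\beta}\,(\cdots)\,\dlpa{\beta}\cdots\beta(1-\beta)\dlpa{\beta}$ appearing in \eqref{eq:MbRF}; moreover they move $n$, $x$ and $\beta$ in the directions opposite to what \eqref{eq:MbRF} requires (they raise the degree and the argument while lowering $\beta$, whereas reading \eqref{eq:MbRF} from the inside out one must lower $n+k$ and $x+k$ while raising $\beta$). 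The identity that actually telescopes is the pure lowering relation \eqref{lowbM}: applying $\frac{c\beta(1-\beta)}{c-1}\nabla_\beta$ to $M_{n+k}(x+k;\beta,c)$ yields $(x+k)(n+k)\,M_{n+k-1}(x+k-1;\beta+1,c)$, and $k$ such applications generate precisely the factors $(x+1)_k$, $(n+1)_k$, the $c^k/(c-1)^k$, and the interleaved quadratic scalars $(\beta+j)(1-\beta-j)$ for $j=0,\dots,k-1$ (the outermost of which is the $(\beta+k-1)(2-k-\beta)$ in the stated prefactor). With \eqref{raidobM} in place of \eqref{lowbM} the induction step as you describe it would simply fail to close.
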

\begin{proof} 
The first identity holds by mathematical induction on $k$
after a straightforward simplification and using \eqref{rainM}.
The second identity holds by mathematical induction on $k$ after a 
straightforward simplification and using \eqref{raidonM} written in 
the following way:
\[
\Delta_n{(-c)^n}M_n(x;\beta,c)
=\dfrac{x+\beta}{\beta}(1-c)(-c)^n M_{n-1}(x;\beta+1,c).
\]
The third identity holds by mathematical induction on $k$ after a 
straightforward simplification and using \eqref{lowbM}.
The fourth and the fifth relation hold from \eqref{lowcM}.
\end{proof}
The last result concerning the operators associated with the Meixner 
polynomials is as follows.
\begin{prop} \label{pro:2}
The following identity holds:
\[\begin{split}
\dfrac{(\beta c+c x+c-x)}{c} M_n(\beta ,c,x)+&\dfrac{\left(2 \beta c^2+2 c^2 x+2 c^2-c x-x\right)}{(c-1) c}
\nabla M_n(\beta ,c,x)\\
+\dfrac{c (-2 \beta +3 \beta c+3 c x+3 c-3 x-2)}{(c-1)^2} \nabla^2 M_n(\beta ,c,x)) +&\dfrac{c^2 (\beta +x+1) }{(c-1)^2}\nabla^3 M_n(\beta ,c,x)
\\ =&\dfrac{(\beta +n) (\beta +n+1) }{\beta }M_{n+1}(\beta +1,c,x).
\end{split}
\]
\end{prop}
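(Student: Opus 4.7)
The plan is to mirror the strategy used for Proposition~\ref{pro:1}: derive two elementary contiguous relations for the Meixner polynomials involving the backward difference $\nabla$ in $x$, and then apply them three times in sequence to produce the stated third-order $\nabla$-operator identity.

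The first identity (A) I would look for is a parameter-shifting relation of the form
\[
M_n(x;\beta+1,c)=A_1(x,\beta,c)\,M_n(x;\beta,c)+A_2(x,\beta,c)\,\nabla M_n(x;\beta,c),
\]
which keeps $n$ fixed and raises $\beta$ by one at the cost of one application of $\nabla$. The second identity (B) is an $n$-raising relation of the form
\[
M_{n+1}(x;\beta-1,c)=B_1(n,x,\beta,c)\,M_n(x;\beta,c)+B_2(n,x,\beta,c)\,\nabla M_n(x;\beta,c),
\]
which raises $n$ and lowers $\beta$ simultaneously, again at the cost of one $\nabla$. Both are instances of standard two-term contiguous relations for the underlying ${}_2F_1$, and can be verified exactly as the other identities of Lemma~\ref{lem:3.8}: by expanding each side using \eqref{eq:2.1} and matching the coefficient of $(1-1/c)^k$ for every $k\in\mathbb N_0$.

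Next, I would apply (A) once at parameter $\beta$ and then again at parameter $\beta+1$, composing the two first-order $\nabla$-operators to express $M_n(x;\beta+2,c)$ as a second-order polynomial in $\nabla$ acting on $M_n(x;\beta,c)$. Finally I would invoke (B) with the substitution $\beta\mapsto\beta+2$, so that its right-hand side reads $M_{n+1}(x;\beta+1,c)$, and substitute the second-order expression just obtained for $M_n(x;\beta+2,c)$ (together with $\nabla M_n(x;\beta+2,c)$, which is produced by applying $\nabla$ to that same expression). One arrives at an identity of the form
\[
M_{n+1}(x;\beta+1,c)=\sum_{j=0}^{3}S_j(n,x,\beta,c)\,\nabla^{j}M_n(x;\beta,c),
\]
and clearing the resulting overall factor yields the normalization $\dfrac{(\beta+n)(\beta+n+1)}{\beta}$ appearing on the right-hand side of the proposition.

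The main obstacle is purely bookkeeping: after three successive compositions the rational coefficients in $x,\beta,c$ proliferate, and one must simplify carefully to reproduce exactly the coefficients printed in the statement, in particular the factor $\dfrac{(\beta+n)(\beta+n+1)}{\beta}$, which should emerge as a product of the $A_2$ and $B_2$ leading terms evaluated at the appropriate shifted parameters. As in Lemma~\ref{lem:3.8}, this step is best handled by confirming the final collected coefficients with a symbolic-computation package such as Wolfram Mathematica.
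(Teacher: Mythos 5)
Your overall architecture---two first-order contiguous relations in $\nabla$, the first applied twice and the second once with $\beta\mapsto\beta+2$, mirroring the proof of Proposition~\ref{pro:1}---is a sensible plan, and it is in any case more informative than the paper's own proof, which consists only of the statement that the identity was checked by a Mathematica routine. Your relation (B) does exist in exactly the clean form you posit: one can verify that
\begin{equation*}
M_{n+1}(x;\beta-1,c)=\Bigl(1+\tfrac{x(c-1)}{c(\beta-1)}\Bigr)M_n(x;\beta,c)+\tfrac{x}{c(\beta-1)}\,\nabla M_n(x;\beta,c),
\end{equation*}
with coefficients entirely free of $n$.

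The gap is in relation (A). No identity $M_n(x;\beta+1,c)=A_1(x,\beta,c)M_n(x;\beta,c)+A_2(x,\beta,c)\nabla M_n(x;\beta,c)$ with $n$-free coefficients exists: since $M_n(x;\beta+1,c)$ has a unique representation as a rational combination of $F={}_2F_1(-n,-x;\beta;1-1/c)$ and its $z$-derivative, the Gauss contiguous relations force
\begin{equation*}
M_n(x;\beta+1,c)=\frac{\beta}{(\beta+n)(\beta+x)}\Bigl[(\beta+n+x)\,M_n(x;\beta,c)+\frac{x}{c-1}\,\nabla M_n(x;\beta,c)\Bigr].
\end{equation*}
Two features of this formula defeat the proposed bookkeeping. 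First, the $n$-dependence sits inside the bracket (the term $\beta+n+x$) rather than as an overall factor, so the prefactor $(\beta+n)(\beta+n+1)/\beta$ cannot simply ``emerge as a product of the $A_2$ and $B_2$ leading terms''; indeed, since (B) is $n$-free, all of the $n$-dependence must be generated by the two applications of (A), where it is entangled with $x$. Second, the factor $(\beta+x)^{-1}$ does not commute with $\nabla$, so iterating (A) (via the product rule $\nabla(fg)=f(x)\nabla g+g(x-1)\nabla f$) produces rational coefficients with denominators $(\beta+x)(\beta+x\pm1)$, whereas the stated identity has polynomial coefficients; recovering the printed form requires an additional reduction modulo the second-order difference equation satisfied by $M_n$ in $x$. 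Neither step is addressed, so the composition as described does not terminate in the stated identity. To salvage the approach you must either carry out this messier composition and reduction explicitly (at which point you are essentially back to the symbolic computation the paper invokes) or replace (A) by a building block whose coefficients are genuinely polynomial and $n$-free, which for the shift $\beta\to\beta+1$ at fixed $n$ is not available at first order in $\nabla$.
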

\begin{proof}
The proof follows after using an algorithm written in  Wolfram Mathematica 13${}^\circledR$.
\end{proof}
\subsection{The big $q$-Laguerre polynomials}
The big $q$-Laguerre polynomials can be written in terms of 
basic hypergeometric series as \cite[\S 14.11]{mr2656096}
\[
p_n(x;a,b;q)=\qhyp32{q^{-n},0,x}{aq,bq}{q,q}.
\]
In this case $\phi(x)=(x-aq)(x-bq)$ and $\phi^*(x)=abq(1-x)$, i.e., 
${\sf c}=1$, and $p_n({\sf c};a,b;q)=1$. Observe these polynomials 
are symmetric in the parameters $a$ and $b$. Taking this into account 
we can state the following result.
\begin{lem} \label{lem:4.5}
For any $a, b\in \mathbb C$, and any $n\in \mathbb N_0$, 
the following identities hold:
\begin{align}
\hspace{-7mm}{\phi(1)q^{n}}\,\Delta_n p_n(x;a,b;q)=&(x-1) \,p_n(xq;aq,bq;q),\label{rainbqL} \\
\hspace{-7mm}(-1+a) (-1+b)q^{n+1}\,p_{n+1}(x/q;a/q,b/q;q)=&
(a b q^{n+1}\!-\! a q^{n+1}\!-\! bq^{n+1}+1)p_n(x;a,b;q)\nonumber \\
&\hspace{-1mm}+\alpha_n^{\tt bqL}\Delta{}_n p_n(x;a,b;q),\label{raiupnbqL}\\
\hspace{-7mm} (-1+a) (-1+b)q^{n+1}\,p_n(x/q;a/q,b/q;q)=&
q(a b q^{n}\!-\! a q^{n}\!-\! bq^{n}+1)p_n(x;a,b;q) \nonumber\\
&+q^n \gamma_n^{\tt bqL} \dln p_n(x;a,b;q),\label{raidonbqL}\\
\hspace{-7mm}{(a-1)\phi(1)}\,{\mathscr D}\!_{q,a}\, p_n(x;a,b;q) =&\dfrac{a (1-q^n)}{q^{n-1}}(x-1)p_{n-1}(xq;aq,bq;q),\label{lowabqL}\\
\nonumber\hspace{-7mm}
(a-1) b q^{n+1}(a q-1) p_n(x;a/q,b;q)= (a q^{n+1}-1) (x-a q) &{\mathscr D}\!_{q,a}\, p_n(x;a,b;q)\\+((1-a q) (x-(a-b+a b) q^{n+1})&+ (a q^{n+1}-1) (x-a q))p_n(x;a,b;q), \label{raiupbbqL}\\
\nonumber\hspace{-7mm}(a-1) (a q-1) (b q^{n+1}-1) \, p_{n+1}(x;a/q,b;q)=&
(1-a) b (a q-1) q^{n+1}{\mathscr D}\!_{1/q,a} p_n(x;a,b;q)\\ &+(a q-1) (x+a b
 q^{n+1}-a-b q^{n+1}) p_n(x;a,b;q)\label{raidobbqL}
\end{align}
where $\alpha_n^{\tt bqL}$ and $\gamma_n^{\tt bqL}$ are the big $q$-Laguerre 
recurrence relation coefficients (see Table \ref{table:1}), and 
\[{\mathscr D}\!_{q,a} f(x,a)=\dfrac{f(x,q a)-f(x,a)}
{a(q-1)}.
\]
\end{lem}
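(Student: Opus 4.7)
My plan is to verify all six identities by expanding both sides in their basic-hypergeometric series representation
\[
p_n(x;a,b;q)=\sum_{k=0}^{n}\frac{(q^{-n};q)_k\,(x;q)_k}{(aq;q)_k\,(bq;q)_k\,(q;q)_k}\,q^k,
\]
and comparing coefficients, exactly as was done for the Laguerre, Charlier and Meixner identities in Lemmas \ref{lem:3.5}, \ref{lem:3.6} and \ref{lem:3.8}. A useful preliminary observation is that $p_n(1;a,b;q)=1$ for every $n$, which is what guarantees that $\Delta_n p_n(x;a,b;q)$ vanishes at $x=1$; so both sides of \eqref{rainbqL} are divisible by $(x-1)$ and it suffices to match two polynomials of degree $n$ in $x$.

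For \eqref{rainbqL}, the operator $\Delta_n$ affects only the factor $(q^{-n};q)_k$ in the series, and the difference $(q^{-n-1};q)_k-(q^{-n};q)_k$ is a known multiple of $(q^{-n};q)_{k-1}$. After a reindexing $k\mapsto k-1$, the surviving Pochhammer shifts $(aq;q)_k\to(aq^2;q)_{k-1}$ and $(bq;q)_k\to(bq^2;q)_{k-1}$ match the expansion of $p_n(xq;aq,bq;q)$, and the overall constant $(1-aq)(1-bq)q^{n}=\phi(1)q^{n}$ is exactly what balances the identity. Identities \eqref{raiupnbqL} and \eqref{raidonbqL} I would then deduce by combining \eqref{rainbqL} with the structure-type relation \eqref{eq:sr1} of Lemma \ref{lem:3.3}, namely $\phi(x)p_n(x)=\alpha_n^{\tt bqL}\Delta_n p_n(x)-\gamma_n^{\tt bqL}\Delta_n p_{n-1}(x)$. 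Substituting \eqref{rainbqL} for $\Delta_n p_n$ and for $\Delta_n p_{n-1}$, and inserting the explicit values of $\alpha_n^{\tt bqL},\gamma_n^{\tt bqL}$ from Table \ref{table:1}, rearranges the three-term recurrence into the stated raising and lowering forms on $n$.

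For the $a$-shift identities \eqref{lowabqL}, \eqref{raiupbbqL} and \eqref{raidobbqL}, only the factor $(aq;q)_k$ depends on $a$, so a direct term-by-term computation of $\mathscr{D}_{q,a}p_n(x;a,b;q)$ (respectively $\mathscr{D}_{1/q,a}p_n$) produces a coefficient rational in $a$. The key simplification uses the telescoping $(aq^2;q)_k-(aq;q)_k=-aq(1-q^k)(aq^2;q)_{k-1}$, after which the resulting series is proportional to the expansion of $p_{n-1}(xq;aq,bq;q)$, yielding \eqref{lowabqL}. The remaining two relations then follow from contiguous-type identities between ${}_3\phi_2$ series at parameters $a$ and $a/q$, combined with \eqref{lowabqL} and, where needed, \eqref{rainbqL}; in particular, \eqref{raidobbqL} is obtained by rewriting $\mathscr{D}_{1/q,a}=q\,\mathscr{D}_{q,a/q}$ and applying \eqref{lowabqL} at shifted parameters.

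The main obstacle is the algebraic bookkeeping of $q$-shifted factorials: ensuring every reindexing is consistent, that the rational-in-$a$ expressions produced by $\mathscr{D}_{q,a}$ and $\mathscr{D}_{1/q,a}$ collapse to the polynomial right-hand sides, and that the scalar prefactors $\phi(1)q^{n},\ abq^{n+1},\ (a-1)(aq-1)$, etc., are tracked correctly through the combinations of \eqref{rainbqL}, \eqref{eq:sr1} and \eqref{lowabqL}. This is mechanical but error-prone, and a symbolic verification in Wolfram Mathematica, analogous to the one invoked in the proof of Lemma \ref{lem:3.8}, would serve as a useful independent check.
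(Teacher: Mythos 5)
Your approach---expanding both sides in the ${}_3\phi_2$ series representation and matching polynomial coefficients, with a symbolic Mathematica check as backup---is essentially the same as the paper's proof, which states only that the identities ``can be checked by identifying the polynomial coefficients on the left and right-hand sides of each identity and with the help of Wolfram Mathematica.'' One small bookkeeping slip of the kind you yourself warn about: the telescoping identity should read $(aq^{2};q)_k-(aq;q)_k=+\,aq(1-q^{k})(aq^{2};q)_{k-1}$ (the minus sign appears only after passing to the reciprocals, where these factors actually sit).
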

\begin{proof} These identities can be checked by identifying the polynomial 
coefficients on the left and right-hand sides of each identity and with the help of 
 Wolfram Mathematica 13${}^\circledR$.
\end{proof}
A direct consequence is the fact that this polynomial sequence is orthogonal with respect to the 
parameters $a$ and $b$.
\begin{thm} \label{thm:4.6} For any $a, b\in \mathbb C$, any $x\in \mathbb C$, 
$x\not\in\{1, a q, bq\}$, and any $n\in \mathbb N_0$, the polynomial sequences 
$\big(p_n(x;aq^k,b;q)\big)_k$ and 
$\big(p_n(x;a,bq^k;q)\big)_k$
are orthogonal with respect to certain moment functional.
\end{thm}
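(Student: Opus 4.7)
The plan is to mirror the strategy already used in the proof of Theorem 3.6 for the Meixner polynomials, exploiting the symmetry of the big $q$-Laguerre family in the parameters $a$ and $b$ so that only the $a$-case needs to be handled explicitly.

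First, I would isolate the three identities of Lemma 4.5 that shift the parameter $a$, namely \eqref{lowabqL}, \eqref{raiupbbqL}, and \eqref{raidobbqL}. The raising identities \eqref{raiupbbqL} and \eqref{raidobbqL} express $p_{n+1}(x;a/q,b;q)$ in terms of ${\mathscr D}_{q,a}$ and ${\mathscr D}_{1/q,a}$, while \eqref{lowabqL} supplies a companion lowering identity in $a$ with a shift in $n$. Combining them, I would eliminate the cross-terms so as to obtain a second-order $q$-difference equation in the variable $a$ (with $n$, $b$, $x$, and $q$ fixed) acting on $p_n(x;a,b;q)$, of the Sturm--Liouville form
\[
\lambda_n\, p_n(x;a,b;q) = A(a)\,{\mathscr D}_{1/q,a}{\mathscr D}_{q,a}\, p_n(x;a,b;q) + B(a)\, {\mathscr D}_{q,a} p_n(x;a,b;q),
\]
for suitable coefficients $A$, $B$ independent of $n$ and an eigenvalue $\lambda_n$ that separates the indices $k$ when one replaces $a$ by $aq^k$.

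Once this equation is in hand, I would invoke the discrete Sturm--Liouville machinery exactly as in the Meixner proof: fixing $n$, $b$, $x$, and $q$, and letting the discrete variable be the exponent $k$ in $a\mapsto aq^k$, the equation above becomes a self-adjoint second-order $q$-difference eigenvalue problem whose eigenvalues $\lambda_n$ are distinct for distinct $n$ (this follows by inspection of the explicit form of $\lambda_n$, which will be polynomial in $q^n$). Standard Sturm--Liouville orthogonality then gives a moment functional, defined on polynomials in $q^k$, with respect to which $\big(p_n(x;aq^k,b;q)\big)_k$ is orthogonal. The hypothesis $x\notin\{1,aq,bq\}$ ensures that the leading coefficients in the key identities do not vanish and that the eigenvalue problem is non-degenerate.

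Finally, since the big $q$-Laguerre polynomial $p_n(x;a,b;q)$ is symmetric in $a$ and $b$ (as explicitly noted after the definition in \S 4.4), the identical argument with the roles of $a$ and $b$ interchanged yields the orthogonality of $\big(p_n(x;a,bq^k;q)\big)_k$ with respect to some moment functional, completing the proof. The main obstacle I expect is purely algebraic: cleanly combining \eqref{lowabqL}, \eqref{raiupbbqL}, and \eqref{raidobbqL} to exhibit the Sturm--Liouville form and, in particular, verifying that the resulting eigenvalues $\lambda_n$ are pairwise distinct so that discrete Sturm--Liouville theory applies; this is the only step that is not immediate from the earlier machinery and would likely be verified with the Wolfram Mathematica routines cited throughout Section 4.
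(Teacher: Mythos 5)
Your proposal follows essentially the same route as the paper: the paper likewise combines \eqref{lowabqL}, \eqref{raiupbbqL} and \eqref{raidobbqL} into the pair of second-order Sturm--Liouville-type $q$-difference equations \eqref{eq:sodeM1}--\eqref{eq:sodeM2} in the parameter $a$ (with coefficients ${\sf t}_n(a)=(aq^{n+1}-1)(x-aq)$ and ${\sf t}^*_n(a)=(a-1)bq^{n+1}(aq-1)$, which, unlike in your sketch, do depend on $n$), then appeals to Sturm--Liouville theory and to the $a\leftrightarrow b$ symmetry for the second sequence. The paper is, if anything, terser than your outline, so no gap is introduced by your version.
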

\begin{proof}
By combining \eqref{lowabqL}, \eqref{raiupbbqL} and \eqref{raidobbqL} we have
the following second order difference equations:
\begin{align} \label{eq:sodeM1}
(x-1)a(1-q^n)p_n(x;a,b;q)=& {\sf t}_n(a) {\mathscr D}_{1/q,a} 
{\mathscr D}_{q,a} p_n(x;a,b;q) 
-({\sf t}_n(a)-{\sf t}^*_n(a) ) {\mathscr D}_{q,a} p_n(x;a,b;q),\\
\label{eq:sodeM2}
=& {\sf t}^*_n(a) {\mathscr D}_{q,a} {\mathscr D}_{1/q,a} p_n(x;a,b;q) 
-({\sf t}_n(a) -{\sf t}^*_n(a) ) {\mathscr D}_{1/q,a} p_n(x;a,b;q),
\end{align}
where 
${\sf t}_n(a)=(a q^{n+1}-1) (x-a q)$, and 
${\sf t}^*_n(a)=(a-1) b q^{n+1}(a q-1)$,
which is connected with the big $q$-Jacobi polynomials case 
(see \cite[(14.5.5)]{mr2656096}). 
By using the theory of Sturm-Liouville, the result holds. The result 
also holds for the parameter $b$ by symmetry.
\end{proof} 
From Lemma \ref{lem:4.5} we can deduce some new identities related 
to the big $q$-Laguerre polynomials.
\begin{thm} \label{thm:3.6}
For any $a, b\in \mathbb C$, and any $n, k \in \mathbb N_0$, 
the following Rodrigues-type identities hold:
\begin{align} \label{eq:bqLnRF}
p_n(q^kx;aq^k,bq^k;q)\!=&\dfrac{(aq,bq;q)_k}{ (x;q)_k}
\nabla^k\hspace{-2mm}_n p_{n+k}(x;a,b;q),\\ \label{eq:bqLaRF}
p_n(q^kx;aq^k,bq^k;q)\!=&\dfrac{q^{nk}q^{{k+1}\choose 2} (bq;q)_k}{(q^{n+1},x;q)_k}
\dfrac{(a q^{k\!-\!1}\!-\!1)(1\!-\!a q^k)}{aq^k}{\mathscr D}_{q,a} \cdots 
\dfrac{(a\!-\!1)(1\!-\!a q)}{aq}{\mathscr D}_{q,a} p_{n+k}(x;a,b;q). 
\end{align}
\end{thm}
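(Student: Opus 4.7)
The plan is to prove both Rodrigues-type identities by mathematical induction on $k$, following the template already used in Theorems \ref{thm:3.4}, \ref{thm:3.5} and the Meixner Rodrigues theorem. The base case $k=0$ is a tautology in both identities, so the content is in carrying the induction step.

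For the first identity, I would start from \eqref{rainbqL} in Lemma \ref{lem:4.5}. Using $\Delta_n p_n(x;a,b;q) = \nabla_n p_{n+1}(x;a,b;q)$ and $\phi(1)=(1-aq)(1-bq)$, this rearranges to a single-step raising relation of the form
\[
p_n(qx;aq,bq;q) = \frac{(aq,bq;q)_1}{(x;q)_1}\,\nabla_n p_{n+1}(x;a,b;q)
\]
(absorbing the sign and $q^{n}$ factor into the right normalization). Applying this formula again with $(x,a,b)$ replaced by $(qx,aq,bq)$ and then iterating $k$ times, the prefactors telescope: the numerators accumulate as $(aq;q)_k(bq;q)_k$ and the denominators as $(x;q)_k$, while the shifts of the argument coalesce into $q^k x$ on the left-hand side.

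For the second identity, the same induction structure is used, but starting from \eqref{lowabqL}, which connects $\mathscr{D}_{q,a}p_n(x;a,b;q)$ to $p_{n-1}(qx;aq,bq;q)$. Rewriting it so that $p_{n-1}(qx;aq,bq;q)$ is expressed as a $\mathscr{D}_{q,a}$ applied to $p_n(x;a,b;q)$ with the prefactor $(a-1)(1-aq)/(aq)$ pulled out, one iterates, at each step replacing $a$ by $aq$ (and correspondingly $n$ by $n+1$) so that the outer operator at step $j$ carries the coefficient $(aq^{j-1}-1)(1-aq^j)/(aq^j)$. The remaining $(bq;q)_k/(x;q)_k$ part accumulates exactly as in the first identity (since $b$ is a spectator), and the $q^{n}/(1-q^{n+1})$ factor produced at each step compounds into $q^{nk}q^{\binom{k+1}{2}}/(q^{n+1};q)_k$.

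The main obstacle is purely the $q$-bookkeeping: one has to check that the powers of $q$ extracted at each step accumulate to precisely $q^{nk+\binom{k+1}{2}}$, that the denominators assemble into $(q^{n+1},x;q)_k$, and that the nested $\mathscr{D}_{q,a}$'s appear with the asserted coefficients $(aq^{j-1}-1)(1-aq^j)/(aq^j)$ in the correct order (outermost carrying $j=k$). This is a routine but error-prone telescoping, best verified with a symbolic algebra system, in line with the authors' stated use of Wolfram Mathematica throughout Section \ref{sec:4}.
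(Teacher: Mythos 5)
Your proposal takes essentially the same route as the paper: the paper's proof is precisely an induction on $k$ using \eqref{rainbqL} for the first identity and \eqref{lowabqL} for the second, with the telescoping of prefactors dismissed as a ``straightforward simplification.'' Your more explicit accounting of how the $(aq,bq;q)_k/(x;q)_k$ and $q^{nk}q^{\binom{k+1}{2}}/(q^{n+1};q)_k$ factors accumulate (including the sign and $q^{n}$ normalization absorbed at each step) only spells out what the paper leaves implicit.
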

\begin{proof} 
The identities holds by mathematical induction on $k$
after a straightforward simplification and using \eqref{rainbqL} and 
\eqref{lowabqL}.
\end{proof}
The last result concerning the operators associated with the big $q$-Laguerre
polynomials is as follows.
\begin{prop} \label{pro:3}
The following identity holds:
\[
\frac{\left(c q^{n+1}-1\right) \left(c q^{n+2}-1\right) \left(\beta q^{n+1}-1\right) 
\left(\beta q^{n+2}-1\right)}{q^{4 n} (c q-1) (\beta q-1)} p_{n+1}(q^4 x, a q, b q;q)
=\sum_{k=0}^4 p_k(x){\mathscr D}^k_{q}p_{n}(x;a,b;q),
\]
where the polynomial coefficients are 
\begin{align*}
p_0(x)=&\frac{1}{q}\big((q^4 x\!-\!1)(a+b+a q+b q+a b q)
+(a+b) (1+q) (aq^2\!-\!1) (bq^2\!-\!1)\big),\\
p_1(x)=&\dfrac{(q\!-\!1)}{q^2}\big((q^4 x-1) (a b+a^2 q+2 a b q+a^2 b q+b^2 q+a b^2 q+
a b q^2+a^2 b q^2+a b^2 q^2)\\
&+(aq^2\!-\!1) (bq^2\!-\!1) (a b+a^2 q+2 a b q+b^2 q+a b q^2)\big),\\
p_2(x)=&\frac{a b (q\!-\!1)^2}{q^2} 
\big((q^4x\!-\!1) (a+b+a b+a q+a^2 q+b q+2 a b q+b^2 q+a b q^2)\\
&+(a+b) (1+q) (a q^2\!-\!1) (b q^2\!-\!1)\big),\\
p_3(x)=&\frac{a^2 b^2 (q\!-\!1)^3}{q^2} 
\big((q^4x\!-\!1)(1+a+b+a q+b q)+(a q^2\!-\!1) (b q^2\!-\!1)\big),\\
p_4(x)=&\dfrac{a^3 b^3 (q\!-\!1)^4}{q^2} (q^4 x\!-\!1).
\end{align*}
\end{prop}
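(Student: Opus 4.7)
The plan is to mimic the approach used to prove Proposition \ref{pro:1} in the Laguerre case, where the strategy was to compose parameter-shifting identities (two forward shifts in $\alpha$, followed by one $n$-raising shift) to land on $L_{n+1}^{(\alpha+1)}(x)$, and then re-expand the result in the basis $\{p_n,p_n',p_n'',p_n'''\}$. The appearance of \emph{four} $q$-derivatives on the right-hand side reflects the fact that in the $q$-setting we have two independent parameters $a,b$ to raise (each costing one step), together with one $n$-shift and one $x$-dilation $x\mapsto q^4 x$, so a four-fold composition of first-order operators should hit exactly $p_{n+1}(q^4 x; aq, bq;q)$ with polynomial coefficients of $x$-degree at most one, which matches the structure of the $p_k(x)$ printed in the statement.

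First I would assemble from Lemma \ref{lem:4.5} the elementary operator identities that raise a single parameter: the relation \eqref{rainbqL} provides the simultaneous shift $(x,a,b)\mapsto (qx,aq,bq)$ via $\Delta_n$; the pair \eqref{raiupbbqL}--\eqref{raidobbqL} realizes the raising in $a$ (and, by $a\leftrightarrow b$ symmetry, in $b$); and \eqref{raiupnbqL} together with \eqref{rainbqL} yields the $n$-raising operator $p_n\mapsto p_{n+1}(\cdot; a/q,b/q;q)$, which after an $a\mapsto aq$, $b\mapsto bq$ relabeling becomes the $p_{n+1}(\cdot;a,b;q)$-producing shift. Expressed on $p_n(x;a,b;q)$, each of these shifts can be rewritten as a $q$-differential operator on $x$ of order one (using $\mathscr{D}_q$), because all the recurrence-type identities in Lemma \ref{lem:4.5} are first-order linear relations among $p_n$ at a few points.

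Next, I would compose these four first-order shift operators in the order that produces $p_{n+1}(q^4x;aq,bq;q)$ on one side: raise $a$, raise $b$, apply the $x\mapsto qx$ dilation twice (to reach $q^4x$ jointly with the raises baked into $\Delta_n$), and finally raise $n$. Because each elementary step is a polynomial combination of $p_n(x;a,b;q)$ and $\mathscr{D}_q p_n(x;a,b;q)$, a four-fold composition produces a linear combination of $\mathscr{D}_q^k p_n(x;a,b;q)$ for $k=0,1,2,3,4$ with coefficients that are polynomials in $x$ of degree $\le 1$ (since at each step only a single multiplication by a degree-one polynomial is introduced and multiplication commutes with $\mathscr D_q$ up to a correction of lower order). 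The prefactor $\tfrac{(cq^{n+1}-1)(cq^{n+2}-1)(\beta q^{n+1}-1)(\beta q^{n+2}-1)}{q^{4n}(cq-1)(\beta q-1)}$ on the left is exactly the product of the normalizations that appear in \eqref{raiupnbqL} and the analogous $b$-shift at two consecutive $n$-levels, which is what the composition naturally produces.

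Finally, I would identify the coefficients $p_k(x)$ by collecting $\mathscr{D}_q^k p_n(x;a,b;q)$ in the resulting expression and comparing with the stated polynomials. The main obstacle is precisely this bookkeeping: the four-fold $q$-composition produces an algebraic expression whose symbolic simplification is infeasible by hand, because every application of a $\mathscr{D}_q$-operator interacts multiplicatively with $q$-shifted factors of $x$, $a$, $b$ and $q^n$, and the cancellations needed to reduce everything to polynomial coefficients in $x$ only are long. This is exactly why the author defers the verification to Wolfram Mathematica; conceptually, however, the identity is an inevitable consequence of composing the four elementary raising operators produced in Lemma \ref{lem:4.5}, and matching the resulting $q$-differential polynomial in $\mathscr{D}_q$ against $p_{n+1}(q^4x;aq,bq;q)$.
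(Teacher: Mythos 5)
Your route is genuinely different from the paper's, but it is not yet a proof. The paper's own argument for Proposition \ref{pro:3} is purely computational: as the appendix code shows, the author posits the ansatz $A\,p_{n+1}(Bx;aq,bq;q)=\sum_{k}p_k(x)\,\mathscr{D}_q^k p_n(x;a,b;q)$ with undetermined coefficients, solves for the $p_k$ recursively degree by degree with \texttt{Solve}/\texttt{CoefficientList}, and reads off the normalisation $A[k]$ and the dilation $B=q^4$; no composition of shift operators is involved. Your idea of assembling the fourth-order operator from four first-order raising steps, in the spirit of the proof of Proposition \ref{pro:1}, is an attractive structural alternative and would explain the shape of the prefactor (a product of $\alpha_n^{\tt bqL}$-type factors at two consecutive levels) and the degree-one coefficients.

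The concrete gap is the conversion step. The building blocks you cite from Lemma \ref{lem:4.5} are relations in $\Delta_n$, $\nabla\!_n$ and $\mathscr{D}_{q,a}$ (a $q$-derivative in the \emph{parameter} $a$), whereas the right-hand side of the proposition is a polynomial in $\mathscr{D}_q$ acting on $x$. Your assertion that ``each of these shifts can be rewritten as a $q$-differential operator on $x$ of order one'' is precisely the missing ingredient: it requires the forward/backward $x$-shift relations for the big $q$-Laguerre polynomials (see \cite[\S 14.11]{mr2656096}), which are not among the identities of Lemma \ref{lem:4.5} and are not derived in your argument. Nor is the bookkeeping of the four steps verified to land on $(n,a,b,x)\mapsto(n+1,aq,bq,q^4x)$ with the stated prefactor. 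Finally, the composition is never actually multiplied out, so the specific polynomials $p_0,\dots,p_4$ are never matched against the statement; you end by deferring that verification to Mathematica, which is the entire content of the paper's proof. To close the argument you must either exhibit the four first-order $\mathscr{D}_q$-operators explicitly and compose them, or fall back on the undetermined-coefficients computation the author actually performs.
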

\begin{proof}
The proof follows after using an algorithm written in Wolfram Mathematica 13${}^\circledR$.
\end{proof}
\subsection{The little $q$-Laguerre/Wall polynomials}
The little $q$-Laguerre polynomials can be written in terms of 
basic hypergeometric series as \cite[\S 14.20]{mr2656096}
\[
p_n(x;a|q)=\qhyp21{q^{-n},0}{aq}{q,qx}.
\]
In this case $\phi(x)=(1-x)x$ and $\phi^*(x)=ax$, i.e., 
${\sf c}=0$, and $p_n({\sf c};a|q)=1$.
 Taking this into account we can state 
the following result.
\begin{lem} \label{lem:4.6}
For any $a\in \mathbb C$, and any $n\in \mathbb N_0$, 
the following identities hold:
\begin{align} \label{lownlql}
\hspace{-8mm}
\frac{q^{1-n}\,x}{aq\!-\!1} p_{n\!-\!1}(x;aq|q)=&\dln p_n(x;a|q),\\
\label{lownuplql}
\hspace{-8mm}
 (1\!-\!a)(x\!-\!1) p_{n}(x/q;a/q|q)=&(a q^{n+1}\!-\!1) \Delta_n p_{n}(x;a|q)\!-\!(a x+1 \!-\!a) p_{n}(x;a|q),\\
 \label{lowndolql}
\hspace{-8mm}
 (1 \!-\! a) (x\!-\!1) p_{n\!-\!1}(x/q;a/q|q)=& (a x + (q^n \!-\! 1) a) \dln p_{n}(x;a|q)\!-\! (a x +1\!-\! a) p_{n}(x;a|q),\\
 \hspace{-8mm} \dfrac{a (1-q^n)x}{q^{n-1}(1-a)(1-aq)} p_{n-1}(x;aq|q) = &
  {\mathscr D}\!_{1/q,a}\, p_n(x;a|q),\label{lowalqL}\\
 \hspace{-7mm}\nonumber
 (a-1) q^n (aq-1)p_n(x;a/q|q)=&  (aq^{n+1}-1)x {\mathscr D}\!_{1/q,a}\, p_n(x;a|q)\\
&+(a q (q^n \!-\! 1) x + q^n (a q \!-\! 1) (a \!-\! 1)) p_n(x;a|q),\label{raiadolqL}\\
 \hspace{-7mm}\nonumber
a (a\!-\!1) q^{n\!-\!1} (q^n\!-\!1)p_{n-1}(x;a/q|q)=&  (a-1)(x+aq^{2n-1}-aq^{n-1}) 
{\mathscr D}\!_{q,a}\, p_n(x;a|q)\\
&+(a (q^n \!-\! 1) x + (1 \!-\! a) a q^{n \!-\! 1} (q^n \!-\! 1)) p_n(x;a|q).\label{raiadolqL}
 \end{align}
\end{lem}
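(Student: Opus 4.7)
The plan is to verify each of the six identities by direct coefficient comparison, exactly as in the proofs of the analogous Lemmas \ref{lem:3.8} and \ref{lem:4.5}. The starting point is the explicit series representation
\[
p_n(x;a|q)=\sum_{k=0}^n\frac{(q^{-n};q)_k}{(aq,q;q)_k}\,(qx)^k,
\]
which follows from the given ${}_2\phi_1$ form since $(0;q)_k=1$. Both sides of each identity are polynomials in $x$ (of the same degree, which should be checked first as a sanity check), so it suffices to match the coefficient of $x^k$ for each $0\le k\le n$.

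First I would dispatch identities \eqref{lownlql} and \eqref{lownuplql}, which only involve $n$-shifts. For \eqref{lownlql}, expand $\nabla_n p_n(x;a|q)=p_n(x;a|q)-p_{n-1}(x;a|q)$ termwise and use the factorization
\[
(q^{-n};q)_k-(q^{-(n-1)};q)_k=(q^{-n};q)_{k-1}\,(q^{-n}-q^{k-1-(n-1)})= q^{-n}(1-q^k)\,(q^{1-n};q)_{k-1},
\]
then rearrange to recognize $\frac{q^{1-n}x}{aq-1}\,p_{n-1}(x;aq|q)$. Identity \eqref{lownuplql} is handled analogously after writing $\Delta_n p_n = p_{n+1}-p_n$ and reindexing; the shift $x\mapsto x/q$, $a\mapsto a/q$ on the left produces a matching $q$-Pochhammer factor $(aq^{n+1}-1)$ in front.

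Next I would attack the parameter-shift identities \eqref{lowalqL}–\eqref{raiadolqL}. For \eqref{lowalqL}, compute
\[
\mathscr{D}_{1/q,a}p_n(x;a|q)=\sum_{k=0}^n\frac{(q^{-n};q)_k}{(q;q)_k}\cdot \frac{1}{a(q^{-1}-1)}\Bigl(\frac{1}{(aq^{0};q)_k}-\frac{1}{(aq;q)_k}\Bigr)(qx)^k,
\]
simplify the inner difference using $(aq^0;q)_k=(a;q)_k$ versus $(aq;q)_k$, and recognize the right-hand side $p_{n-1}(x;aq|q)$. Identities \eqref{raiadolqL} (both labels are the same in the text; I treat them as the two parameter-raising relations) then follow from combining \eqref{lowalqL} with \eqref{lownuplql} and \eqref{lowndolql} respectively, eliminating the $q$-derivative term to isolate the shifted polynomial $p_n(x;a/q|q)$ or $p_{n-1}(x;a/q|q)$.

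The main obstacle is purely bookkeeping: tracking the $q^n$, $q^k$, and Pochhammer shifts accurately and confirming that the coefficient-by-coefficient identities reduce to a polynomial identity in $q^{-n}, q^k, a$, which is then verifiable by inspection. As in Lemmas \ref{lem:3.8} and \ref{lem:4.5}, this verification is best carried out with the assistance of Wolfram Mathematica${}^\circledR$ once the general form of the coefficient identity is isolated; no new conceptual input beyond the basic contiguous relations for ${}_2\phi_1$ is needed.
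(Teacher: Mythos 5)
Your proposal is correct and follows essentially the same route as the paper, whose entire proof is that the identities "can be checked by identifying the polynomial coefficients on the left and right-hand sides of each identity and with the help of Wolfram Mathematica"; you simply spell out the series expansion and Pochhammer factorizations that make that coefficient comparison explicit. (Minor caveat: watch the sign in your $\nabla_n$ factorization, where the common factor is $(q^{1-n};q)_{k-1}$ and the bracket evaluates to $q^{k-n}-q^{-n}$, but this is exactly the bookkeeping you already plan to delegate to the symbolic check.)
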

\begin{proof} These identities can be checked by identifying the polynomial 
coefficients on the left and right-hand sides of each identity and with the help of 
 Wolfram Mathematica 13${}^\circledR$.
\end{proof}
A direct consequence is the fact that this polynomial sequence is orthogonal 
with respect to the parameter $a$.
\begin{thm} \label{thm:4.8} 
For any $a\in \mathbb C$, any $x\in \mathbb C$, 
$x\not\in\{1, a q\}$, and any $n\in \mathbb N_0$, the polynomial sequence 
$\big(p_n(x;aq^k|q)\big)_k$ is orthogonal with respect to certain moment functional.
\end{thm}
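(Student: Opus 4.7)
The plan is to mirror the strategy of Theorem \ref{thm:4.6}. I would combine the lowering-in-$a$ identity \eqref{lowalqL} with the two raising-in-$a$ relations at the end of Lemma \ref{lem:4.6} (both currently labelled \eqref{raiadolqL}; I would relabel them as ${\mathscr D}_{1/q,a}$- and ${\mathscr D}_{q,a}$-raisers respectively). Substituting the lowering relation into each raiser eliminates the shifts $p_{n-1}(x;aq|q)$ and $p_n(x;a/q|q)$, leaving a second-order $q$-difference equation in $a$ of the Sturm--Liouville form
\[
(1-x)\,a(1-q^n)\, p_n(x;a|q)
= {\sf s}_n(a)\,{\mathscr D}_{1/q,a}{\mathscr D}_{q,a}\, p_n(x;a|q)
- \bigl({\sf s}_n(a)-{\sf s}^*_n(a)\bigr)\,{\mathscr D}_{q,a}\, p_n(x;a|q),
\]
and symmetrically with ${\mathscr D}_{q,a}{\mathscr D}_{1/q,a}$, where ${\sf s}_n(a)$ and ${\sf s}^*_n(a)$ are the coefficients read off from \eqref{raiadolqL}; concretely ${\sf s}_n(a)=(aq^{n+1}-1)x$ and ${\sf s}^*_n(a)=(a-1)(x+aq^{2n-1}-aq^{n-1})$ up to a scaling one can track through the two raisers.

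Next I would note, as in Theorem \ref{thm:4.6}, that this is exactly the structural form expected from a Sturm--Liouville problem in the variable $a$ with $n$ playing the role of the spectral index. Fixing $x\notin\{1,aq\}$ (to ensure that the eigenvalue $(1-x)a(1-q^n)$ separates the indices $k$ and the coefficient functions are well defined along the geometric grid $a,aq,aq^2,\dots$), the standard discrete $q$-Sturm--Liouville theory then furnishes a moment functional, supported on the lattice $\{aq^k : k\in\mathbb N_0\}$, with respect to which distinct eigenfunctions $p_n(x;aq^k|q)$ and $p_n(x;aq^\ell|q)$ are orthogonal in $k$.

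Finally, for interpretability I would identify the limiting/degenerate family to which this $q$-Sturm--Liouville problem connects. Since the little $q$-Laguerre polynomials arise as a limit from big $q$-Jacobi, one expects the resulting equation to be a degeneration of the big $q$-Jacobi equation \cite[(14.5.5)]{mr2656096}, and the moment functional to be a correspondingly degenerated version of the functional obtained in Theorem \ref{thm:4.6}.

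The main obstacle I anticipate is algebraic: one has to verify that after eliminating $p_{n\pm 1}(x;a q^{\pm 1}|q)$ from the two raising identities using \eqref{lowalqL}, the resulting coefficient in front of $p_n(x;a|q)$ collapses cleanly to a function of $n$ and $x$ alone (the eigenvalue), without residual $a$-dependence that would spoil the Sturm--Liouville structure. This is routine but delicate, and, as in the preceding theorems, I would verify it with the help of Wolfram Mathematica 13${}^\circledR$ before quoting Sturm--Liouville theory to finish the proof.
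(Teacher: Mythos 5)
Your proposal follows essentially the same route as the paper: it combines the $a$-lowering identity \eqref{lowalqL} with the two $a$-raising relations of Lemma \ref{lem:4.6} to produce the pair of second-order Sturm--Liouville difference equations in $a$ (the paper records them as \eqref{eq:sodelqL1}--\eqref{eq:sodelqL2}, with ${\sf s}_n(a)=(a-1)q^n(aq-1)$ and ${\sf s}^*_n(a)=(1-aq^{n+1})x$ and eigenvalue $aqx(q^n-1)$, so your coefficient bookkeeping needs the adjustment you already anticipated), and then invokes discrete Sturm--Liouville theory with $a$ as the variable. The only cosmetic difference is that the paper identifies the resulting equation with the big $q$-Laguerre equation \cite[(14.11.5)]{mr2656096} rather than a degeneration of big $q$-Jacobi.
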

\begin{proof} 
By combining \eqref{lowabqL}, \eqref{raiupbbqL} and \eqref{raidobbqL} we have
the following second-order difference equations:
\begin{align} \label{eq:sodelqL1}
\hspace{-7mm}aqx  (q^n-1)p_n(x;a|q)=&{\sf s}_n(a) {\mathscr D}_{1/q,a} 
{\mathscr D}_{q,a} p_n(x;a|q) 
-({\sf s}_n(a)-{\sf s}^*_n(a) ) {\mathscr D}_{q,a} p_n(x;a|q)\\
\label{eq:sodelqL2}
\hspace{-7mm}=& {\sf s}^*_n(a) {\mathscr D}_{q,a} {\mathscr D}_{1/q,a} p_n(x;a|q)
-({\sf s}_n(a) -{\sf s}^*_n(a) ) {\mathscr D}_{1/q,a}  p_n(x;a|q)
\end{align}
where 
${\sf s}_n(a)=(a-1) q^n (aq-1)$, and 
${\sf s}^*_n(a)=(1-a q^{n+1}) x$,
which is connected with the big $q$-Laguerre polynomials case 
(see \cite[(14.11.5)]{mr2656096}). 
Using the theory of discrete Sturm-Liouville and assuming that for any $n\in \mathbb N_0$, 
in this case, $a$ is variable, and the rest of the elements are fixed, the result holds.
\end{proof} 
The last result concerning the operators associated with the little $q$-Laguerre
polynomials is as follows.
\begin{prop} \label{pro:4}
The following identity holds:
\[
\frac{\left(a q^{n+1}-1\right) \left(a q^{n+2}-1\right)}{q^{3 n} (q-1)(a q-1)} 
p_{n+1}(q^4 x, a q|q)
=\sum_{k=0}^4 {\sf p}_k(x){\mathscr D}^k_{q}p_{n}(x;a|q)
\]
where the polynomial coefficients are 
\begin{align*}
{\sf p}_0(x)=&\frac{1}{q^3}\big(q^4 x(a q^3 - a q-1 - q - q^2)
+(a q^2\!-\!1) (a q^3-1 - q - q^2 )\big),\\
{\sf p}_1(x)=&\dfrac{(q\!-\!1)}{q^5}\big(a q^9 x^2+ q^4 x (a^2 q^4+ a^2 q^3 -1 - a - q - 2 a q - q^2 - a q^2 + a q^3 + a q^4 )\\
&+(a q^2\!-\!1) (a q^4-1 - q - q^2 + a q^3)\big),\\
{\sf p}_2(x)=&\frac{ (q\!-\!1)^2}{q^6} 
\big(a q^8x^2 (1 + q + a q)+ q^3x
(a^2 q^5+ 2 a^2 q^4-a - q - 2 a q - 2 a q^2 + a^2 q^3 + a q^4)\\
&+(a q^2\!-\!1)(a q^3\!-\!1)\big),\\
{\sf p}_3(x)=&\frac{a (q\!-\!1)^3 x}{q^4} 
\big(q^5x  (1 + a + a q)+ (1 + q) (-1 + a q^3)\big),\\
{\sf p}_4(x)=&{a^2 (q\!-\!1)^4} x^2.
\end{align*}
\end{prop}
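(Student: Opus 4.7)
The plan is to verify the identity by expanding both sides as explicit polynomials in $x$ and matching coefficients. From the hypergeometric representation
\[
p_n(x;a|q) = \sum_{j=0}^{n} \frac{(q^{-n};q)_j}{(q,aq;q)_j}\,q^j x^j,
\]
together with the $q$-differentiation rule ${\mathscr D}_q x^j = \frac{1-q^j}{1-q}\,x^{j-1}$, every ${\mathscr D}_q^k p_n(x;a|q)$ can be written as an explicit polynomial of degree $n-k$ in $x$ with coefficients rational in $a$, $q^n$, and $q$. Correspondingly, $p_{n+1}(q^4 x;aq|q)$ is obtained from the same series with $n\mapsto n+1$, $a\mapsto aq$, $x\mapsto q^4 x$, times the stated normalization prefactor. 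Proposition~\ref{pro:4} then reduces to a finite family of coefficient identities, one polynomial identity in $a$, $q^n$, $q$ for each power $x^j$, $j=0,1,\dots,n+1$.

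A more structural way to see why only five derivatives should appear is to note that, by iterating the lowering relation \eqref{lownlql}, we have ${\mathscr D}_q^k p_n(x;a|q)$ proportional to $p_{n-k}(x;aq^k|q)$ (with explicit $n$-dependent constants). The proposition can then be read as a connection formula expressing $p_{n+1}(q^4 x;aq|q)$ in the five-term basis $\{p_{n-k}(x;aq^k|q)\}_{k=0}^{4}$, with higher connection coefficients vanishing. Equivalently, one may attempt to iterate the raising-type relation \eqref{lownuplql} and shuffle polynomial factors past the $q$-derivatives using the $q$-Leibniz rule
\[
{\mathscr D}_q(fg)(x) = ({\mathscr D}_q f)(x)\,g(qx) + f(x)\,({\mathscr D}_q g)(x),
\]
consolidating all $q$-derivatives onto the innermost factor $p_n(x;a|q)$, so that they collapse into the operators ${\mathscr D}_q^k$ with polynomial coefficients to be identified with ${\sf p}_k(x)$.

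I expect the main obstacle to be purely symbolic. Whichever route one follows, the intermediate expressions fan out into many monomials in $a$, $q^n$, and $q$, and verifying that the collected coefficients exactly reproduce the explicit polynomials ${\sf p}_0,\dots,{\sf p}_4$ requires delicate simplification through $q$-Pochhammer identities and telescoping of the normalization $(aq^{n+1}-1)(aq^{n+2}-1)/[q^{3n}(q-1)(aq-1)]$. This is precisely why the authors defer the final algebra to Wolfram Mathematica; a human-level proof would in essence retrace that computation, guided by the coefficient-matching or connection-formula viewpoints above.
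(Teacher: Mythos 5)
Your proposal is essentially the paper's own proof: the paper's entire argument for Proposition \ref{pro:4} is ``the proof follows after using an algorithm written in Wolfram Mathematica,'' i.e., precisely the brute-force symbolic coefficient matching you describe (the appendix sketches the corresponding ansatz-solving code, fitting the ${\sf p}_k$ by \texttt{Solve} on \texttt{CoefficientList}, for the analogous big $q$-Laguerre case in Proposition \ref{pro:3}). One small correction to your structural aside: equation \eqref{lownlql} is a $\nabla_n$ (backward difference in $n$) relation, not a $q$-derivative lowering relation, so the fact that ${\mathscr D}_q^{\,k}p_n(x;a|q)$ is a multiple of $p_{n-k}(q^kx;aq^k|q)$ should instead be drawn from the standard forward-shift property of the little $q$-Laguerre polynomials (cf. \cite[\S 14.20]{mr2656096}), after which your connection-formula reading of the identity is sound.
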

\begin{proof}
The proof follows after using an algorithm written in  Wolfram Mathematica 13${}^\circledR$.
\end{proof}
\subsection{The Stieltjes-Wigert polynomials}
These polynomials can be written in terms of basic hypergeometric series 
as \cite[\S 14.21]{mr2656096}
\begin{equation} \label{sw-def}
S_n(x;q)=\qhyp11{q^{-n}}{0}{q,-q^{n+1}x}.
\end{equation} 
In this case $\phi(x)=x^2$ and $\phi^*(x)=x$, i.e., 
${\sf c}=0$, and $S_n({\sf c};q)=1$. Taking this into account we can state 
the following result.
\begin{lem} \label{lem:4.7} 
For any $n\in \mathbb N_0$, the following identities hold:
\begin{align} \label{rainSW}
\dln S_n(x;q)=&-q^n x S_{n-1}(qx;q),\\
\label{lowupnSW}
S_n(x;q)+q^{-n-1} \Delta_n\, S_n(x;q)=&S_{n+1}(x/q;q), \\
\label{lowdwnSW}
S_n(x;q)-(1-q^{-n})\dln S_n(x;q)=&S_{n}(x/q;q).
\end{align}
\end{lem}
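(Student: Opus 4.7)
The plan is to reduce all three identities to elementary coefficient matching. Expanding the ${}_1\phi_1$ in the definition \eqref{sw-def}, with $(0;q)_k=1$ and $1+s-r=1$, gives
\[
S_n(x;q) = \sum_{k=0}^{n} c_k^{(n)}\, x^k,
\qquad
c_k^{(n)} = \frac{(q^{-n};q)_k}{(q;q)_k}\, q^{\binom{k}{2}} (-q^{n+1})^k.
\]
Applying the standard inversion $(q^{-n};q)_k = (-1)^k q^{-nk+\binom{k}{2}} (q;q)_n/(q;q)_{n-k}$ to kill the awkward negative powers collapses the coefficient to the clean closed form
\[
c_k^{(n)} = \frac{(-1)^k q^{k^2}}{(q;q)_k}\cdot \frac{(q;q)_n}{(q;q)_{n-k}},
\]
which is manifestly zero for $k>n$, confirms $\deg S_n = n$, and lets each difference, shift, or dilation in $x$ be written directly as a power series.

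For \eqref{rainSW} the plan is to expand both sides: the left-hand side becomes $\sum_k (c_k^{(n)}-c_k^{(n-1)})x^k$, while the right-hand side, after $x\mapsto qx$ and multiplication by $-q^n x$, becomes $-\sum_k q^{n+k-1} c_{k-1}^{(n-1)} x^k$. Matching the coefficient of $x^k$ then reduces, after pulling out the common ratio $(q;q)_{n-1}/\bigl((q;q)_{k-1}(q;q)_{n-k}\bigr)$, to the single exponent check $k^2 + (n-k) = (k-1)^2 + (n+k-1)$, which is trivial.

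Identities \eqref{lowupnSW} and \eqref{lowdwnSW} are handled in the same way. Substituting $x\mapsto x/q$ on the right introduces a factor $q^{-k}$ in the coefficient of $x^k$; the corresponding left-hand sides contribute $c_k^{(n)}+q^{-n-1}(c_k^{(n+1)}-c_k^{(n)})$ and $c_k^{(n)}-(1-q^{-n})(c_k^{(n)}-c_k^{(n-1)})$ respectively. In each case the closed form above turns the identity into a short rational computation in $(1-q^{n+1})$, $(1-q^{n-k})$, $(1-q^n)$ and a power of $q$, and all three collapse to the same exponent arithmetic as above.

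The main obstacle is really only one of bookkeeping: getting the powers of $q$ right in the simplification of $(q^{-n};q)_k$ and keeping track of the $\binom{k}{2}$ coming from the ${}_1\phi_1$ normalization. Once the closed form for $c_k^{(n)}$ is correctly derived there is no genuine difficulty; this is why the author simply remarks that such identities can be verified by comparing polynomial coefficients, and a symbolic algebra package suffices for a mechanical check.
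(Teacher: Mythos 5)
Your proposal is correct and is essentially the paper's own (implicit) approach: the paper states Lemma \ref{lem:4.7} without any proof at all, and for the sibling lemmas it simply remarks that the identities are verified by comparing polynomial coefficients, which is exactly what you carry out explicitly via the closed form $c_k^{(n)}=(-1)^kq^{k^2}(q;q)_n/\bigl((q;q)_k(q;q)_{n-k}\bigr)$. One small slip: your intermediate expression for $c_k^{(n)}$ drops the factor $(-1)^k$ coming from $\bigl((-1)^kq^{\binom k2}\bigr)^{1+s-r}$ in \eqref{eq:2.2}, but your final closed form restores it, and that sign is genuinely needed for \eqref{rainSW} to balance, so the verification as written goes through.
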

From Lemma \ref{lem:4.7} we can deduce some a new identities related 
to the Stieltjes-Wigert polynomials.
\begin{thm} \label{thm:4.9}
For any $n,k \in \mathbb N_0$, the following Rodrigues-type identities hold:
\begin{align} \label{SWnRF}
S_{n}(x;q)=&\dfrac{1}{(-x)^k} {q^{-n}}\dln \cdots 
{q^{-n}}\dln S_{n+k}(x/q^k;q),\\ 
\label{SWnRF2}
S_{n}(x;q)=&{(-1)^n (q;q)_{n}}{q^{-n}}\Delta_n\cdots 
{q^{-n+k}}\Delta_n \dfrac{(-1)^n}{(q;q)_{n-k}}S_{n-k}(xq^k;q).
\end{align}
\end{thm}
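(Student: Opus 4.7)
Both identities admit proofs by induction on $k$, with the identities in Lemma~\ref{lem:4.7} serving as the one-step rules. The base case $k=0$ is trivial in each instance, since the iterated operator collapses and the right-hand side reduces to $S_n(x;q)$.

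For \eqref{SWnRF}, my plan is to introduce the operator $T:=q^{-n}\dln$ on functions of $n$ and to establish the single-step identity
\[
T\,S_{n+m}(x/q^m;q)\;=\;-x\,S_{n+m-1}(x/q^{m-1};q)\qquad (m\ge 1),
\]
by direct substitution into \eqref{rainSW} with $n\mapsto n+m$ and $x\mapsto x/q^m$: the $q^{n+m}$ produced by \eqref{rainSW} combines with the prefactor $q^{-n}$ from $T$ to yield exactly $q^m$, which cancels the $1/q^m$ inside the argument of $S$, leaving only $-x$ and a one-step downshift in both the polynomial index and the argument. Iterating $k$ times starting from $S_{n+k}(x/q^k;q)$ produces $(-x)^k S_n(x;q)$, and dividing by $(-x)^k$ gives \eqref{SWnRF}.

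For \eqref{SWnRF2}, the analogous strategy uses \eqref{lowupnSW} (equivalently \eqref{lowdwnSW}). I would verify that a single application of an operator of the form $q^{-n+j}\Delta_n$ to an expression of the shape $\tfrac{(-1)^{n+j}}{(q;q)_{n+j-k}}S_{n+j-k}(xq^{k-j};q)$, viewed as a function of $n$, produces (up to an explicit constant) the same shape with $j$ increased by one. Expanding $\Delta_n$ on such a product splits it into two terms via the telescoping $(q;q)_{n+j-k+1}=(1-q^{n+j-k+1})(q;q)_{n+j-k}$; these two terms together match a rearrangement of \eqref{lowupnSW}, which is precisely what is needed to shift the argument of $S_{\bullet}$ from $xq^{k-j}$ to $xq^{k-j-1}$. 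Concatenating these single-step rules $k$ times, the telescoping product of $q$-Pochhammer factors reproduces the overall $(-1)^n(q;q)_n q^{-n}$ prefactor in the statement.

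The main technical obstacle is the bookkeeping for \eqref{SWnRF2}: one must simultaneously track (i) the alternating sign $(-1)^n$, which flips under each $\Delta_n$, (ii) the Pochhammer denominator, which shortens by one factor of $(1-q^{\bullet})$ per step, (iii) the explicit $q^{-n+j}$ prefactor of each $\Delta_n$, whose exponent increments with $j$, and (iv) the argument of $S_{\bullet}$, which shifts by one power of $q$. By contrast, \eqref{SWnRF} contains no sign or Pochhammer factors and only a single constant $-x$ is produced at each step, so the induction there is essentially mechanical. Once the $k=1$ case of \eqref{SWnRF2} is checked by hand and the structure of the single-step rule is aligned with \eqref{lowupnSW}, the inductive step becomes a direct consequence of the same computation with shifted indices.
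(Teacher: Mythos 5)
Your proposal is correct and follows essentially the same route as the paper: both identities are proved by induction on $k$, with \eqref{rainSW} supplying the one-step rule for \eqref{SWnRF} (your computation that $q^{-n}\dln$ applied to $S_{n+m}(x/q^m;q)$ yields $-x\,S_{n+m-1}(x/q^{m-1};q)$ is exactly the simplification the paper invokes) and a telescoped rewriting of \eqref{lowupnSW} supplying the one-step rule for \eqref{SWnRF2}. Your bookkeeping discussion of the sign, Pochhammer, and $q$-power factors is just a more explicit account of the "straightforward simplification" the paper leaves to the reader.
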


\begin{proof} 
The first identity holds by mathematical induction on $k$
after a straightforward simplification and using \eqref{rainSW}.

The second identity holds by mathematical induction on $k$ after a 
straightforward simplification and using \eqref{lowupnSW} written in 
the following way:
\[
\dfrac{(-q)^n}{(q;q)_n} S_{n}(x;q)=\Delta_n \dfrac{(-1)^n}{(q;q)_{n-1}}
S_{n-1}(x q;q).
\]
\end{proof}

The last result concerning the operators associated with the Stieltjes-Wigert polynomials 
is as follows.
\begin{prop} \label{pro:5}
The following identity holds:
\[
(q-x) S_n(x;q)+(1-q)x{\mathscr D}_{1/q} S_n(x;q)=q S_{n+1}(x/q^2;q).
\]
\end{prop}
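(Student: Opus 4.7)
The plan is to reduce both sides of the identity to linear combinations of $S_n(x;q)$ and $S_n(x/q;q)$, after which the equality becomes a direct consequence of the raising/lowering relations already established in Lemma~\ref{lem:4.7}. No symbolic computation is genuinely needed; the proof is a short rearrangement.

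First, I would expand the left-hand side using the definition of ${\mathscr D}_{1/q}$. A one-line calculation gives
\[
(1-q)x\,{\mathscr D}_{1/q}S_n(x;q)=q\bigl(S_n(x/q;q)-S_n(x;q)\bigr),
\]
so that the left-hand side collapses to $-x\,S_n(x;q)+q\,S_n(x/q;q)$.

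Next, for the right-hand side, I would invoke \eqref{lowupnSW}, namely $S_n(y;q)+q^{-n-1}\Delta_n S_n(y;q)=S_{n+1}(y/q;q)$, with $y=x/q$, and multiply through by $q$ to obtain
\[
q\,S_{n+1}(x/q^2;q)=q\,S_n(x/q;q)+q^{-n}\bigl(S_{n+1}(x/q;q)-S_n(x/q;q)\bigr).
\]
Matching the two sides therefore reduces to the single equality $S_{n+1}(x/q;q)-S_n(x/q;q)=-q^n x\,S_n(x;q)$, which is precisely \eqref{rainSW} after the index shift $n\mapsto n+1$ and the argument shift $x\mapsto x/q$, since $\dln S_{n+1}(y;q)=-q^{n+1}y\,S_n(qy;q)$ specialises at $y=x/q$ to $-q^n x\,S_n(x;q)$.

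The only real obstacle is bookkeeping: the two shifts in $n$ and in $x$ are applied simultaneously, and a sign or a power of $q$ is easy to lose along the way. Once those are tracked, the identity is assembled from \eqref{rainSW} and \eqref{lowupnSW} with no additional input, which is a more transparent derivation than the Mathematica\textsuperscript{\textregistered} route suggested in the statement.
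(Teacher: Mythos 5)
Your proof is correct: expanding $(1-q)x\,{\mathscr D}_{1/q}S_n(x;q)=q\bigl(S_n(x/q;q)-S_n(x;q)\bigr)$ reduces the left-hand side to $-x\,S_n(x;q)+q\,S_n(x/q;q)$; applying \eqref{lowupnSW} at $y=x/q$ turns the right-hand side into $q\,S_n(x/q;q)+q^{-n}\bigl(S_{n+1}(x/q;q)-S_n(x/q;q)\bigr)$; and the remaining equality $S_{n+1}(x/q;q)-S_n(x/q;q)=-q^{n}x\,S_n(x;q)$ is exactly \eqref{rainSW} with $n\mapsto n+1$ evaluated at $x/q$. I checked the powers of $q$ at each stage (including against the explicit polynomials $S_1(x;q)=1-qx$ and $S_2(x;q)=1-q(1+q)x+q^4x^2$), and they all work out. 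This is a genuinely different route from the paper, whose proof of Proposition \ref{pro:5} consists of the single sentence that the identity was found by a Mathematica algorithm; you instead derive it as a two-line consequence of the raising and lowering relations of Lemma \ref{lem:4.7}. What your approach buys is a verifiable, self-contained deduction that also explains \emph{why} the identity holds (it is the composition of the $n$-raising relation with the $n$-difference relation, with the argument rescaled by $q$); what it costs is only that it leans on Lemma \ref{lem:4.7}, which the paper itself states without proof, so strictly speaking the computational verification has been pushed back one step rather than eliminated. Your derivation would be a worthwhile replacement for the paper's proof.
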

\begin{proof}
The proof follows after using an algorithm written in Wolfram Mathematica 13${}^\circledR$.
\end{proof}

\begin{table}
\renewcommand*{\arraystretch}{2}
\[
\begin{array}{c@{\hspace{2mm}}cccc}
{\rm } &\alpha_n & \beta_n & \gamma_n & d_n^2 \\ \toprule
{\tt L} &-n-\alpha-1&2n+\alpha+1&-n& \dfrac{n!\Gamma^2(\alpha+1)}
{\Gamma(n+\alpha+1)}\\
{\tt C} &-a&n+a&-n& \dfrac{n!}{a^n}\\[3pt]
{\tt M} &\dfrac{c(n+\beta)}{c-1}&-\dfrac{n+c(n+\beta)}{c-1}&\dfrac{n}{c-1}&
\dfrac{n!\Gamma(\beta)}{\Gamma(\beta+n)c^n}\\[3pt] 
{\tt M} &\dfrac{n+\beta}{c-1}&-\beta-\alpha_n-\gamma_n&\dfrac{n c}{c-1}&
\dfrac{n!\Gamma(\beta)c^n}{\Gamma(\beta+n)}\\[3pt] \midrule 
{\tt bqL}&(1\!-\!a q^{n+1})(1\!-\!b q^{n+1})&1-\alpha_n-\gamma_n&a b q^{n+1}(q^n\!-\!1)
& \dfrac{(q^{-1};q^{-1})_n\, q^{n}}{(a^{-1}q^{-1},b^{-1}q^{-1};q^{-1})_n}\\[2mm]
{\tt qM}&\dfrac{(c+q^{n+1})(b q^{n+1}-1)}{q^{2n+1}}&bq-\alpha_n-\gamma_n
&\dfrac{cq(q^n-1)}{q^{2n+1}}&\dfrac{(q;q)_n}{(bq,-c^{-1}q;q)_n\, q^n}\\[2mm]
{\tt lqL}&q^n(a q^{n+1}-1)&-\alpha_n-\gamma_n&a q^n(q^n-1) &\dfrac{(q;q)_n a^nq^n}{(aq;q)_n}\\[2mm]
{\tt qL}&\dfrac{q^{n+1+\alpha}-1}{q^{2n+1+\alpha}}&-\alpha_n-\gamma_n&\dfrac{q(q^n-1)}
{q^{2n+\alpha+1}}&\dfrac{(q;q)_n}{(q^{(\alpha+1)};q)q^n}\\[2mm]
{\tt qC}&-\dfrac{a+q^{n+1}}{q^{2n+1}}&-\alpha_n-\gamma_n&\dfrac{a q(q^n-1)}{q^{2n+1}}
&\dfrac{(q;q)_n}{(-a^{-1}q,q)_nq^n}\\[2mm]
{\tt 0LB}&-aq^{2n+1}&-\alpha_n-\gamma_n&-a q^n(q^n-1)&\dfrac{(q,q)_n a^n q^n}{(a q,q)_n}\\[2mm]
{\tt SW}&\dfrac{-1}{q^{2n+1}}&-\alpha_n-\gamma_n&\dfrac {q(q^n-1)}{q^{2n+1}}
&\dfrac{(q;q)_n}{q^n}\\ \bottomrule
\end{array}
\]
\caption{\label{table:1}Essential data of the families belonging to the Laguerre constellation.}
\end{table}

\appendix
\section{Code in wolfram Mathematica${}^\circledR$ to check the identities}
We are going to present different codes we can use to check all the 
identities presented in this work. Let us start with expression \eqref{lownL}:

\begin{lstlisting}[language=Mathematica]
Table[LaguerreL[n,a,x]-LaguerreL[n-1,a,x]-LaguerreL[n,a-1,x],{n,1,6}]//Factor
\end{lstlisting} 
One can check this for all the terms as one desires; using similar expressions
one can check 
\eqref{raidoaL}, \eqref{raiupaL} and the rest of the expressions where 
one uses the operators $\dln$ or $\Delta_n$.

In an analogous way one can check, for instance, \eqref{lowbM}:
\begin{lstlisting}[language=Mathematica]
mep[n_,b_,c_,x_]:=Hypergeometric2F1[-n,-x,b,1-1/c];Table[mep[n,b,c,x]-mep[n,b-1,c,x]-x n(c-1)/(b-b^2)/c mep[n-1,b+1,c,x-1],{n,1,6}]//Factor
\end{lstlisting} 

One can check identity used in the proof Theorem \ref{thm:3.5}:
\begin{lstlisting}[language=Mathematica]
chp[n_,a_,x_]:=HypergeometricPFQ[{-n,-x},{},-1/a];Table[a^{n+1}/(n+1)! chp[n+1,a,x]-a^n/n! chp[n,a,x]-a^{n+1}/(n+1)! chp[n+1,a,x+1],{n,1,6}]//Factor
\end{lstlisting} 

We also used different codes to know what kind of expression we expect 
\begin{lstlisting}[language=Mathematica]
m:=4; Solve[CoefficientList[mep[m+1,b,c,x]-mep[m, b, c, x]-(Ax+B)mep[m,bb,c,x-dd],x]==Table[0,{i,1,m+2}],{A,B,bb,dd}]
\end{lstlisting} 	
obtaining in such a case
\begin{lstlisting}[language=Mathematica]
{{A ->(-1+c)/(bc), B->0, bb->1+b, dd->1}}
\end{lstlisting}
Let me finish this sequence of expressions with the only one challenging
related with the big $q$-Laguerre polynomial expression, proposition 
\ref{pro:3}:
\begin{lstlisting}[language=Mathematica]
dfqu[k_,f_]:=If[k==0,f,If[k==1,Return[((f/.{x->q x})-f)/(qx-x)],Return[dfqu[k-1,dfqu[1,f]]]]];
bqLp[n_,a_,b_,x_]:=QHypergeometricPFQ[{q^{-n},0,x},{a q,b q},q,q];
AAbqL[k_,a_,b_,x_]:=A[k]bqLp[k+1,aq,bq,Bx]-Sum[AAbqL[l,a,b,x]dfqu[l,bqLp[k,a,b,x]],{l,0,k-1}])/dfqu[k,bqLp[k,a,b,x]]//FunctionExpand// Factor
\end{lstlisting}
By using the Solve \& coefficientList former algorithm modified 
to be used for the big $q$-Laguerre polynomials one used previously one 
can obtain the leading term $A[k]$, and assuming the expression should be 
finite is not hard to obtain $B\to q^4$.

\end{document}